\newtheorem{theorem}{Theorem}
\newtheorem{lemme}[theorem]{Lemma}
\def\card{\mathrm{Card}}
\newcommand{\EE}{\mathop{\hbox{\rm I\kern-0.17em E}}\nolimits}
\newcommand{\PP}{\mathop{\hbox{\rm I\kern-0.17em P}}\nolimits}
\newcommand{\E}{{\mathbb E}}
\newcommand{\N}{{\mathbb N}}
\newcommand{\Z}{{\mathbb Z}}
\renewcommand{\P}{{\mathbb P}}
\newcommand{\R}{{\mathbb R}}
\newcommand{\ind}{{\bf 1}}
\def\i{\mathbf{i}}
\def\ie{\textit{i.e.} }
\def\eg{\textit{e.g.} }
\def\etal{\textit{et al.} }
\title{The 2d-Directed Spanning Forest is almost surely a tree}
\author{David Coupier, Viet Chi Tran}
\date{\today}
\begin{document}
\maketitle

\begin{abstract}
We consider the Directed Spanning Forest (DSF) constructed as follows: given a Poisson point process N on the plane, the ancestor of each point is the nearest vertex of N having a strictly larger abscissa. We prove that the DSF is actually a tree. Contrary to other directed forests of the literature, no Markovian process can be introduced to study the paths in our DSF. Our proof is based on a comparison argument between surface and perimeter from percolation theory. We then show that this result still holds when the points of N belonging to an auxiliary Boolean model are removed. Using these results, we prove that there is no bi-infinite paths in the DSF.
\hfill $\Box$
\end{abstract}

\noindent Keywords: Stochastic geometry; Directed Spanning Forest; Percolation.\\
\noindent AMS Classification: 60D05

\section{Introduction}

\indent
Let us consider a homogeneous Poisson point process $N$ (PPP) on $\R^2$ with intensity $1$. The plane $\R^2$ is equipped with its canonical orthonormal basis $(O,e_{x},e_{y})$ where $O$ denotes the origin $(0,0)$. In the sequel, the $e_{x}$ and $e_{y}$ coordinates of any given point of $\R^2$ are respectively called its abscissa and its ordinate, and often denoted by $(x,y)$.\\
From the PPP $N$, Baccelli and Bordenave \cite{baccellibordenave} defined the \textit{Directed Spanning Forest} (DSF) with direction $e_x$ as a random graph whose vertex set is $N$ and whose edge set satisfies: the ancestor of a point $X\in N$ is the nearest point of $N$ having a strictly larger abscissa. In their paper, the DSF appears as an essential tool for the asymptotic analysis of the Radial Spanning Tree (RST). Indeed, the DSF can be seen as the limit of the RST far away from its root.\\
Each vertex $X$ of the DSF almost surely (a.s.) has a unique ancestor (but may have several children). So, the DSF can have no loop. This is a forest, \ie a union of one or more disjoint trees. The most natural question one might ask about the DSF is whether the DSF is a tree. The answer is ``yes'' with probability $1$.

\begin{theorem}
\label{th1}
The DSF constructed on the homogeneous PPP $N$ is a.s. a tree.
\end{theorem}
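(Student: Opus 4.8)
The plan is to reduce Theorem~\ref{th1} to a \emph{coalescence} statement for ancestral paths and then to prove coalescence by a percolation-type comparison. Since a.s.\ every vertex has a unique ancestor, the DSF is a forest, so it is a tree iff it is connected; and one checks that, in this directed setting, two vertices $X$ and $Y$ lie in the same tree iff their ancestral rays $h^X=(X_0=X,X_1,X_2,\dots)$ and $h^Y=(Y_0=Y,Y_1,\dots)$, where $X_{k+1}$ is the ancestor of $X_k$, eventually coincide (two rays sharing a vertex share all later ones, the ancestor being unique). As $N\times N$ is countable, it therefore suffices --- using translation invariance of $N$ together with the Mecke--Palm formula --- to prove that, after adding two deterministic points $u,v$ to $N$, the rays $h^u$ and $h^v$ coalesce almost surely. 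A preliminary observation is that two ancestral paths cannot cross without first meeting: the open right half-disc $B(X_k,|X_{k+1}-X_k|)\cap\{x>(X_k)_x\}$ attached to each edge contains no point of $N$, and a short argument on these forbidden regions rules out a transversal intersection of two edges. Hence, writing $\Delta(t)$ for the difference of ordinates of the two paths at abscissa $t\ge\max(u_x,v_x)$ (well defined and of constant sign by non-crossing, say $\Delta\ge 0$), coalescence is exactly $\{\Delta(t)=0\text{ for some }t\}$, and $0$ is absorbing.

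The core of the proof is to show that $\Delta$ hits $0$ a.s. In the directed forests built on $\Z^2$, or in discretizations of the Brownian web, the analogue of $\Delta$ is a genuine mean-zero random walk and one simply invokes recurrence; here the ancestor of a point depends on the whole region explored to find it, so $\Delta$ admits no Markovian --- nor even autonomous one-dimensional --- description, and this is the main obstacle. The substitute is a renormalization argument. Tile $\R^2$ by squares of a fixed side $L$ and declare a square \emph{good} when the local configuration of $N$ is regular (point count near its mean, no abnormally large empty ball, hence no abnormally long DSF edge starting inside, etc.); for $L$ large the good squares stochastically dominate a supercritical site field, hence percolate, and inside a region of good squares every DSF edge has length $O(L)$ and paths progress in a controlled way. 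Two ingredients then feed the comparison. First, the reflection $y\mapsto -y$ preserves the law of $N$ and commutes with the DSF construction (``nearest point with strictly larger abscissa'' is symmetric in the ordinate), so over a good abscissa-window the increment of $\Delta$ is centered. Second --- this is the ``surface versus perimeter'' input --- over any abscissa-window on which the two paths stay at distance $\ge 1$, the region they enclose has area at least a constant times the product of the window length and the typical gap, hence contains at least that many points of $N$; but the ancestral rays of all those points are trapped between the two boundary paths and must run alongside them, so this area is controlled by the total \emph{length} of the two boundary paths over the window, which the good-square structure forces to be only $O(\text{window length})$. Comparing the two bounds shows that the conditional probability of the paths staying at distance $\ge 1$ across a window of length $n$ is bounded away from $1$, uniformly, and in fact tends to $0$ with $n$.

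The rest is then standard. Applied along a sequence of disjoint windows, the last estimate shows that $\Delta(t)<1$ for arbitrarily large $t$, a.s. Each time the two paths are that close, a direct first-moment computation on the part of $N$ strictly ahead of them --- legitimate on the good event that the neighbouring edges are short, so that only a bounded, essentially fresh, region must be inspected --- gives probability at least some $p>0$ that they merge within the next unit of abscissa. Choosing these ``attempts'' far enough apart that the renormalization makes them quasi-independent, a Borel--Cantelli argument yields $\P(h^u\text{ and }h^v\text{ coalesce})=1$; integrating over $(u,v)$ and summing over the countably many pairs of points of $N$ gives that a.s.\ all ancestral rays coalesce, i.e.\ the DSF is connected, hence a.s.\ a tree. (Equivalently, one can first establish, via the $y\mapsto -y$ symmetry and ergodicity of $N$, the dichotomy ``one tree or infinitely many trees'', and then rule out the latter with the same surface-versus-perimeter count of how many tree-rays may cross a fixed vertical segment.) The genuinely delicate point I expect is the middle one: setting up the renormalization so that the enclosed-area / boundary-length bookkeeping \emph{and} the centering of the increments both survive, precisely because there is no Markov chain to lean on.
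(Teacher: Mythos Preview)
Your proposal has a genuine gap at exactly the point you yourself flag as ``the genuinely delicate point'': the central renormalization/centering/area-versus-length argument is only a heuristic, and as written it does not go through. Two specific problems. First, the step ``the ancestral rays of all those points are trapped between the two boundary paths \dots\ so this area is controlled by the total length of the two boundary paths'' is not a valid inference: trapped rays certainly coalesce among themselves and with the boundaries, but many distinct points can share the same merging vertex, so the number of points in the strip is not bounded by the number of vertices on the boundary paths, and no area $\le C\cdot$length bound follows. Second, the reflection $y\mapsto -y$ gives a \emph{global} distributional symmetry, but it does not make the increment of $\Delta$ over a window centered \emph{conditionally} on the past: the configuration already explored (the half-discs $B(X_k,|X_{k+1}-X_k|)$ attached to earlier edges) overlaps the window in an asymmetric way, and this is precisely the non-Markovian obstruction the paper emphasizes. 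Without a mechanism to decouple the future from this explored region, neither the centering nor the quasi-independence of your ``attempts'' is available, and your Borel--Cantelli closes on nothing.

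The paper's proof takes a completely different route that sidesteps these difficulties. It argues by contradiction via a Burton--Keane style count: assuming $\P(\mathcal N\ge 2)>0$, one first upgrades to the existence, with positive probability, of \emph{three} disjoint infinite paths emanating from a fixed box $C_{m,M}$ and exiting through its east side (a short stationarity/planarity argument). The middle path is then protected by a \emph{local modification}: one resamples $N$ in an annular region around $C_{m,M}$ to plant a ``shield'' of nearby points that forces every path entering from outside to be deflected onto one of the two outer paths, never the middle one. This produces, with positive probability, an event $F_{m,M}$ saying that some path from $C_{m,M}$ meets no path started outside $C_{m,M}$. Translating $F_{m,M}$ over a $(2L+1)\times(2L+1)$ grid of boxes, the expected number of occurrences is $\asymp L^2$; but any two occurrences give two disjoint infinite paths, so the number of occurrences is bounded by the number of DSF edges exiting the big rectangle, whose expectation is shown to be $O(L^{3/2})$. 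That is the surface-versus-perimeter contradiction here, and it lives at the level of the whole tiling, not between two specific paths. Your parenthetical alternative (``one tree or infinitely many, then count rays crossing a segment'') is closer in spirit but still misses the crucial ingredient: one needs the \emph{isolated-path} event $F_{m,M}$, and obtaining it requires the three-paths-plus-shield construction, which is where the real work is.
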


\noindent
Let us remark that the isotropy and the scale-invariance of the PPP $N$ imply that Theorem \ref{th1} still holds when the direction $e_{x}$ is replaced with any given $u\in\R^2$ and for any given value of the intensity of $N$.\\
Theorem \ref{th1} means that a.s. the paths in the DSF, with direction $e_{x}$ and coming from any two points $X,Y\in N$, eventually coalesce. In other words, any two points $X,Y\in N$ have a common ancestor somewhere in the DSF. This can be seen on simulations of Figure \ref{fig1}.

\begin{figure}[!ht]
\begin{center}
\begin{tabular}{cc}
(a) & (b)\\
\includegraphics[width=6cm,height=6cm]{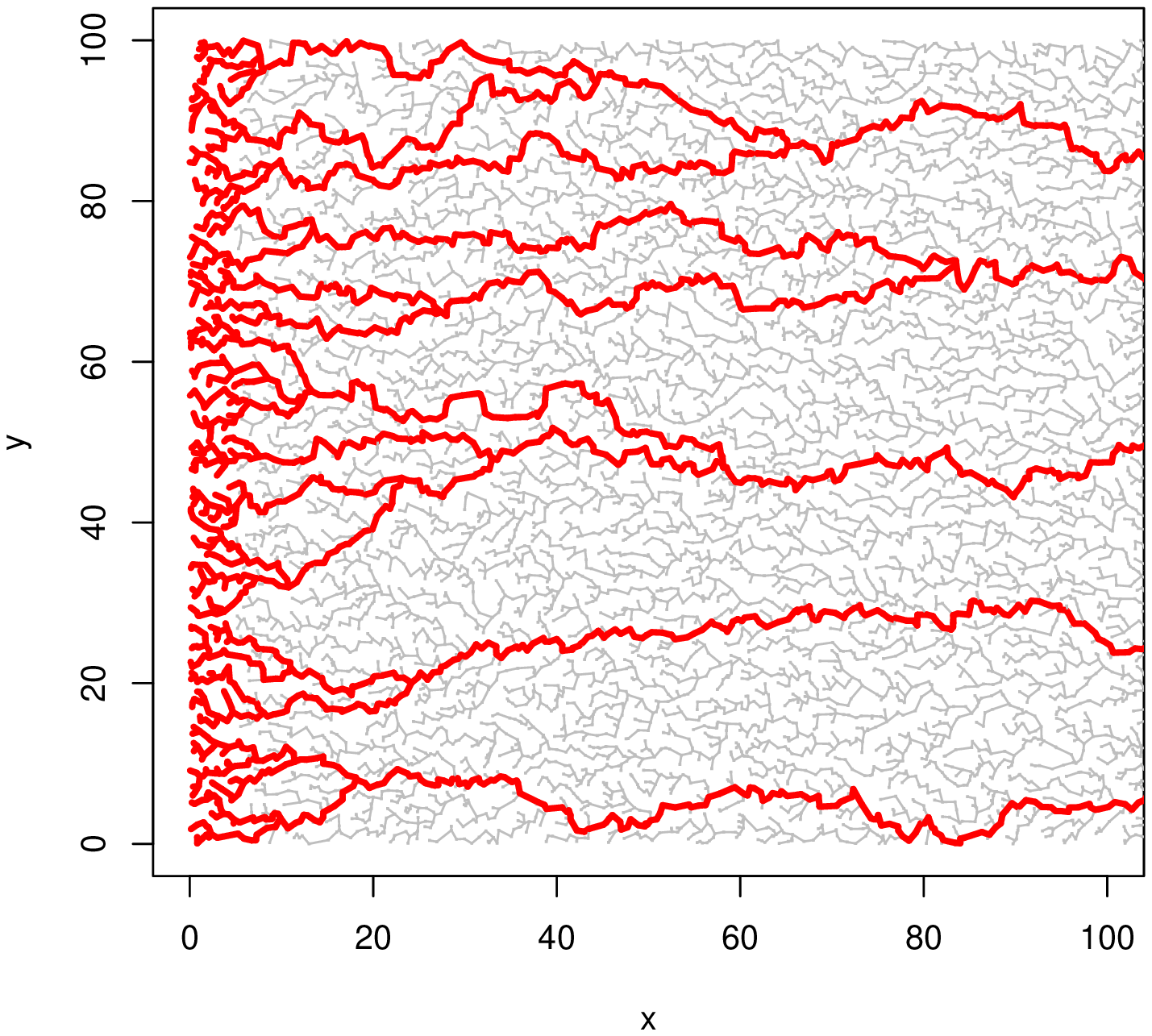}
 & \hspace{-0.5cm}\includegraphics[width=10cm,height=6cm]{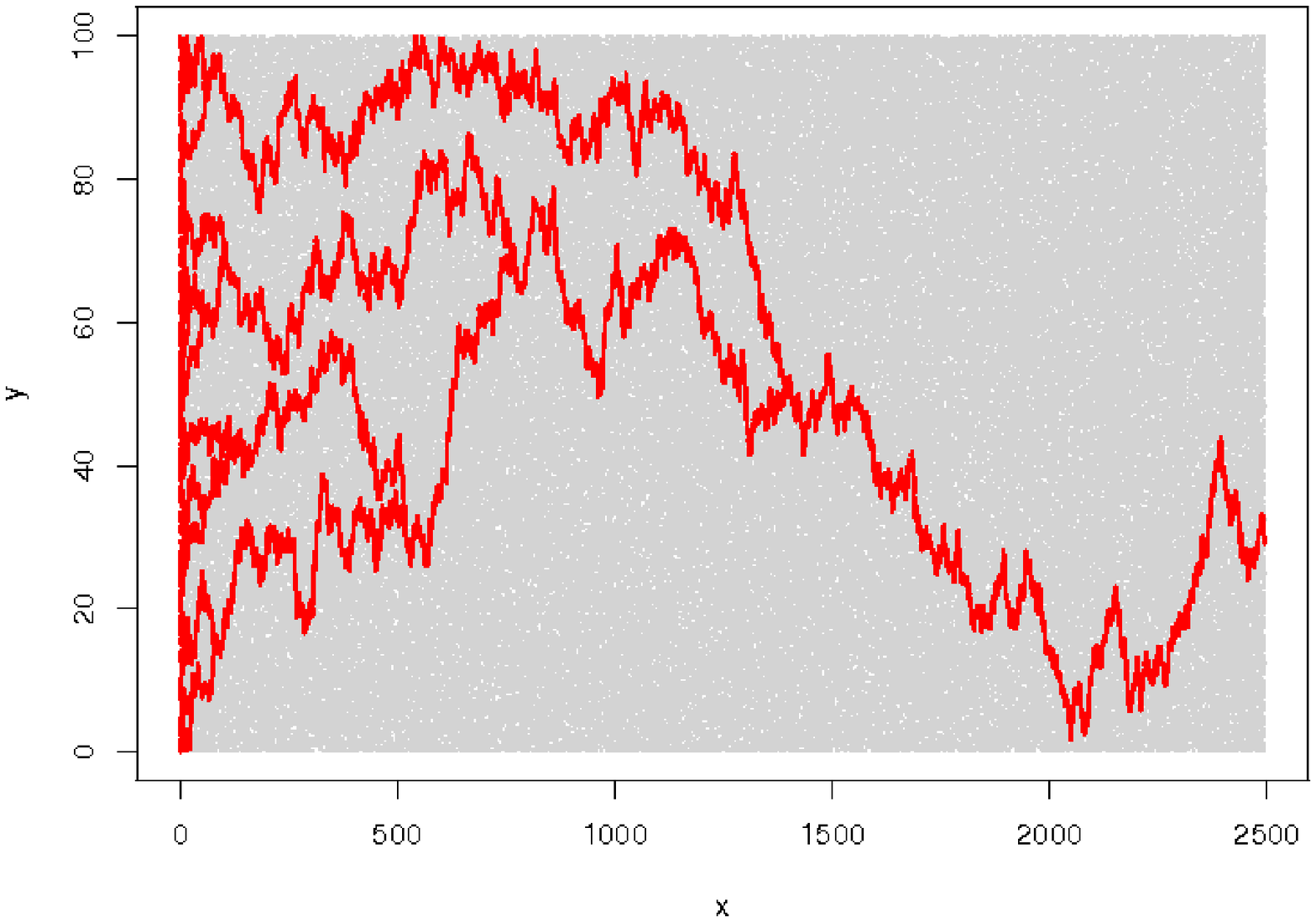}
 \end{tabular}
 \vspace{-0.5cm}
\caption{{\small \textit{Simulations of the Directed Spanning Forest. The paths with direction $e_{x}$ and coming from vertices with abscissa $0\leq x\leq 5$ and ordinates $0\leq y\leq 100$ and their ancestors are represented in bold red lines.}}}\label{fig1}
\end{center}
\end{figure}

\noindent
In the context of first and last passage percolation models on $\Z^2$, Licea and Newman \cite{liceanewman} and Ferrari and Pimentel \cite{FP} have stated that infinite paths with the same direction have to coalesce. For technical reasons, the coalescence is harder to prove in continuum models, typically when the lattice $\Z^2$ is replaced by a PPP on $\R^{2}$. This has been done by Howard and Newman \cite{howardnewman2} for (continuum) first passage percolation models. In the same spirit, Alexander \cite{alexander} studies the number of topological ends (i.e. infinite self-avoiding paths from any fixed vertex) of trees contained in some minimal spanning forests.\\
The DSF is radically different from the graphs described in the papers mentioned previously. Indeed, the construction of paths in the DSF only requires local information whereas that of first and last passage paths need to know the whole PPP.\\

Similar directed forests built on local knowledge of the set of vertices have been studied. Consider the two dimensional lattice $\Z^{2}$ where each vertex is open or closed according to a site percolation process. Gangopadhyay \etal \cite{gangopadhyayroysarkar} connect each open vertex $(x,y)$ to the closest open vertex $(x',y')$ such that $x'=x+1$ (with an additional rule to ensure uniqueness of the ancestor $(x',y')$). Athreya \etal \cite{athreyaroysarkar} choose the closest open vertex $(x',y')$ in the $\frac{\pi}{2}$ lattice cone generated at $(x,y)$ and with direction $e_{x}$. Ferrari \etal \cite{ferrarilandimthorisson} connect each point $(x,y)$ of a PPP on $\R^2$ to the first point $(x',y')$ of the PPP whose coordinates satisfy $x'>x$ and $|y-y'|<1$. In these three works, it is established that the obtained graph is a.s. a tree. \\
However, these three models offer a great advantage; the choice of the ancestor of the vertex $(x,y)$ does not depend on what happens before abscissa $x$. This crucial remark allows to introduce easily Markov processes (indexed by the abscissa), the use of martingale convergence theorems and of Lyapunov functions. This is no longer the case in the DSF considered here. Indeed, given a descendant $Y$ of $(x,y)$ and its ancestor $Y'$, the ball $B(Y,|Y-Y'|)$ may overlap the half-plane $\{x'>x\}$ and the ancestor of $(x,y)$ cannot be in the resulting intersection. See Figure \ref{fig:notmarkov} for an illustration of this phenomenon. Because of this, Bonichon and Marckert \cite{bonichonmarckert}, who consider navigation on random set of points in the plane, have to restrict to cones of width at most $2\pi/3$ for instance and their techniques do not apply to our case with $\pi$. \\
Notice also our discrete model bears similarities with the Brownian web (\eg \cite{fontesisopinewmanravishankar}), for which it is known that the resulting graph is a tree with no bi-infinite path. See Figure \ref{fig1} (b).

\begin{figure}[!ht]
\begin{center}
\psfrag{y}{\small{$Y$}}
\psfrag{y'}{\small{$Y'$}}
\psfrag{x}{\small{$(x,y)$}}
\includegraphics[width=2.1cm,height=5cm]{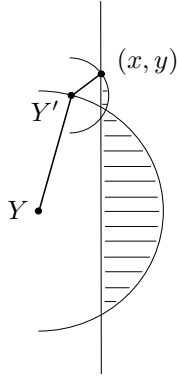}
\caption{{\small \textit{$Y$, $Y'$ and $(x,y)$ are points of the PPP $N$. In the DSF with direction $e_{x}$, $Y$ and $Y'$ are some descendants of $(x,y)$ and $Y'$ is the ancestor of $Y$. The ancestor of $(x,y)$ cannot belong to the hatched region.}}}\label{fig:notmarkov}
\end{center}
\end{figure}

\noindent
Our strategy to prove Theorem \ref{th1} is inspired from the percolation literature. It is based on a comparaison argument between surface and perimeter due to Burton and Keane \cite{burtonkeane}, and used in \cite{FP,howardnewman2,liceanewman}. The key properties of the point process $N$ that are needed are its invariance by a group of translations and the independence between the restrictions of $N$ to disjoint sets. This independence is to be used carefully though: for instance, modifying the PPP locally in a bounded region may have huge consequences on the paths of the DSF started in another region.\\
Then, Theorem \ref{th1} is extended to random environment. More precisely, if we delete all the points of the PPP $N$ which belong to an auxiliary (and independent) Boolean model, then the DSF constructed on the remaining PPP is still a tree (Theorem \ref{th1boolean}).\\
Finally, Theorem \ref{thbiinf} states that, in the DSF with direction $e_{x}$ and with probability one, the paths with direction $-e_{x}$ coming from any vertex $X$ are all finite. Combining this with Theorem \ref{th1}, this means that the DSF has a.s. one topological end.\\

\indent
In Section \ref{proofpart1}, a key result (Lemma \ref{corol2}) is stated from which stems the comparison argument between surface and perimeter that allows to prove Theorem \ref{th1}. Section \ref{section:P(F)>0} is devoted to the proof of Lemma \ref{corol2}. In Section \ref{sectiontrous}, we show that Theorem \ref{th1} still holds when the set on which $N$ is defined becomes a random subset of $\R^{2}$. Finally, in Section \ref{sectionbiinfini}, we use the fact that the DSF is a tree to prove, as an illustration, that there is no bi-infinite paths in the DSF.

\section{Proof of Theorem \ref{th1}}
\label{proof}

\subsection{Any two paths eventually coalesce}
\label{proofpart1}

For $X\in N$, let us denote by $\gamma_X$ the path of the DSF started at $X$ and with direction $e_x$. It is composed of the ancestors of $X$, with edges between consecutive ancestors. This path will be identified with the subset of $\R^2$ corresponding to the union of segments $[X';X'']$ in $\R^2$ where $X'$ and $X''$ are two consecutive ancestors of $X$. By construction, this path is infinite.\\
Let $X,Y\in N$. Either the paths $\gamma_X$ and $\gamma_Y$ coalesce in a point $Z\in N$ which will be the first common ancestor to $X$ and $Y$, and coincide beyond this point. Or they do not cross and are disjoint subsets of $\R^2$. This trivial remark will be widely used in Section \ref{section:P(F)>0}.\\
Let us denote by $\mathcal{N}\in \{1,2,\dots,+\infty\}$ the number of \textit{disjoint} infinite paths of the DSF. Theorem \ref{th1} is obviously equivalent to $\P(\mathcal{N}=1)=1$. Now, we assume that
\begin{equation}
\P(\mathcal{N}\geq 2)>0
\end{equation}
and our purpose is to obtain a contradiction. We first state the key result (Lemma \ref{corol2}) which will be proved in Section \ref{section:P(F)>0} and then explain how it leads to a contradiction.\\

Before starting the proof, let us remark that the ergodicity of the PPP $N$ implies that the variable $\mathcal{N}$ is constant almost surely. So without loss of generality, we could assume $\P(\mathcal{N}\geq 2)=1$ instead of $\P(\mathcal{N}\geq 2)>0$. See \cite{MR}, Chapter 2.1 for details on ergodicity applied to stationary point proceses.\\

Let $\N^*=\{1,2,\dots \}$ be the set of positive integers. For any $m,M\in \N^*$, let us denote by $C_{m,M}$ the cell $[-m,m)\times[-M,M)$. Let $F_{m,M}$ be the following event: there exists a path $\gamma_X$ in the DSF with $X\in C_{m,M}$ that does not meet any other path $\gamma_Y$ for all $Y\in \{x < m\}\setminus C_{m,M}$:
\begin{equation}
F_{m,M} = \big\{ \exists X \in N \cap C_{m,M} , \; \forall Y \in N \cap \{x < m\} \setminus C_{m,M} , \; \gamma_X \cap \gamma_Y = \emptyset \big\} ~.
\label{defF_mM}
\end{equation}

\smallskip
\begin{lemme}
\label{corol2}
If $\P(\mathcal{N}\geq 2)>0$ then there exist some positive integers $m,M$ such that $\P\big(F_{m,M}\big)>0$.
\end{lemme}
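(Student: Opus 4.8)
The plan is to deduce Lemma~\ref{corol2} from the assumption $\P(\mathcal{N}\geq 2)>0$ by a counting/pigeonhole argument localized to a large box. First I would use the remark already recorded in the excerpt: by ergodicity of $N$ the random variable $\mathcal{N}$ is a.s. constant, so $\P(\mathcal{N}\geq 2)>0$ upgrades to $\P(\mathcal{N}\geq 2)=1$, hence a.s. there exist at least two disjoint infinite paths in the DSF. Fix two such disjoint paths $\gamma_{X_0}$ and $\gamma_{X_1}$, say (after relabelling) with $X_0,X_1$ chosen among the vertices realizing distinct coalescence classes.

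Next I would argue that, with positive probability, one can find a rectangle $C_{m,M}$ that "separates" one path from all the others in the relevant sense. The point is that the two disjoint paths, being infinite and directed in $e_x$, have starting points somewhere in the plane; by translation invariance we may as well assume one of them, $\gamma_X$ with $X = X_0$, has its starting vertex in a bounded region. Since $\gamma_{X_0}$ and $\gamma_{X_1}$ never meet and the paths are monotone in abscissa, the initial vertex $X_0$ can be taken in a box $C_{m,M}$ with $M$ large enough (and $m$ large enough) that $X_1\notin C_{m,M}$ and $X_1$ lies in $\{x<m\}$. More importantly, one must ensure that $\gamma_{X_0}$ avoids $\gamma_Y$ for \emph{every} $Y\in N\cap\{x<m\}\setminus C_{m,M}$, not just for $Y=X_1$. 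Here I would invoke the dichotomy stated in Section~\ref{proofpart1}: two DSF paths either coalesce or are disjoint, and coalescence is transitive, so the vertices of $N$ split into coalescence classes. If $\gamma_{X_0}$ met some $\gamma_Y$ with $Y$ outside $C_{m,M}$ but in $\{x<m\}$, then $X_0$ and $Y$ would be in the same class; so the event $F_{m,M}$ is exactly the event that some vertex $X\in C_{m,M}$ represents a coalescence class none of whose other members (with abscissa $<m$) lie in $C_{m,M}$.

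With that reformulation, I would finish by a limiting argument. Suppose for contradiction that $\P(F_{m,M})=0$ for all $m,M\in\N^*$. Then a.s., for every box $C_{m,M}$, every vertex $X\in N\cap C_{m,M}$ has some $Y\in N\cap\{x<m\}\setminus C_{m,M}$ with $\gamma_X\cap\gamma_Y\neq\emptyset$; i.e. every coalescence class meeting $C_{m,M}$ also has a representative of abscissa $<m$ outside $C_{m,M}$. Now take $X_0$ and $X_1$ in distinct classes, fix $m$ large enough that both $X_0$ and $X_1$ have abscissa $<m$, and let $M\to\infty$: eventually $C_{m,M}$ contains, among the vertices of abscissa $<m$, the \emph{entire} intersection of the class of $X_0$ with $\{x<m\}$ that lies within ordinate range $[-M,M)$ — but this may fail because a class can have infinitely many members of bounded abscissa spread over all ordinates. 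To handle this I would instead track a single path: look at $\gamma_{X_0}$ up to the first time it reaches abscissa $\geq m$; this is a finite piece of $\R^2$, and the set of $Y\in N\cap\{x<m\}$ whose paths could meet it within $\{x<m\}$ is contained in a bounded region (paths are monotone in $x$, so a path starting at abscissa $<m$ can only meet $\gamma_{X_0}\cap\{x<m\}$ if it starts close enough), whence contained in $C_{m,M}$ for $M$ large. Then the negation of $F_{m,M}$ forces $\gamma_{X_0}$ to coalesce, before abscissa $m$, with a path starting at a vertex of $C_{m,M}$ other than via $X_0$'s class — contradicting the existence of the companion disjoint path $\gamma_{X_1}$ once we also put $X_1$ in a suitable box. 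So some $\P(F_{m,M})>0$.

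The main obstacle, and the step I would spend the most care on, is the geometric control needed to confine to a bounded box $C_{m,M}$ the set of all vertices $Y$ of abscissa $<m$ whose path $\gamma_Y$ could possibly intersect the portion of $\gamma_{X_0}$ lying in $\{x<m\}$: one must show that such $Y$ cannot be too far in ordinate from $\gamma_{X_0}$, using only the directedness and the nearest-neighbour construction (the edges of a DSF path have controlled, though not uniformly bounded, lengths). If the edge lengths were unbounded with bad probability this confinement could fail, so one likely needs to combine the deterministic monotonicity in $x$ with a soft a.s. finiteness statement (the path $\gamma_{X_0}$ restricted to $\{x<m\}$ is a.s. a bounded subset of $\R^2$, and only finitely many vertices of $N$ have a path hitting it). Once this localization is in place, the pigeonhole/contradiction argument above closes cleanly.
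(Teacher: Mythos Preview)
Your proposal has a genuine gap and, as it stands, cannot be completed along the lines you sketch.

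The first problem is a misreading of the event $F_{m,M}$. It requires that $\gamma_X\cap\gamma_Y=\emptyset$ \emph{as subsets of $\R^2$}, i.e.\ that $\gamma_Y$ never coalesces with $\gamma_X$, anywhere. Your localization paragraph tries to control only those $Y$ whose path meets ``the portion of $\gamma_{X_0}$ lying in $\{x<m\}$'', and your conclusion (``forces $\gamma_{X_0}$ to coalesce, before abscissa $m$, with \ldots'') presumes the same restriction. But a vertex $Y\in\{x<m\}\setminus C_{m,M}$ far in ordinate from $X_0$ can perfectly well have its path $\gamma_Y$ stay disjoint from $\gamma_{X_0}$ on $\{x<m\}$ and coalesce with it only much later, at some abscissa $\gg m$; such a $Y$ still violates the condition in $F_{m,M}$.

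This is not a technicality you can repair by enlarging $M$. Under $\P(\mathcal N\ge2)=1$ there are at least two coalescence classes, but nothing prevents the class of $X_0$ from containing infinitely many points of $N$ in every strip $\{x<m\}$, spread over unbounded ordinates. Then for every $M$ there is some $Y\in\{x<m\}\setminus C_{m,M}$ in the same class as $X_0$, so the clause ``$\exists X\in N\cap C_{m,M}$ witnessing $F_{m,M}$'' fails with $X=X_0$. The existence of a disjoint companion $\gamma_{X_1}$ plays no role here: the negation of $F_{m,M}$ only says each $X\in C_{m,M}$ shares its class with some vertex outside the box, which is fully compatible with the presence of another class.

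What is missing is precisely the mechanism the paper introduces: one first upgrades two disjoint paths to \emph{three} disjoint paths started in a cell and exiting by its east side (Lemma~\ref{3paths}); the \emph{middle} path $\gamma_X$ is then trapped between the outer two on $\{x\ge m-\delta\}$, so no $\gamma_Y$ can reach it there without crossing one of the flanking paths. To prevent $\gamma_Y$'s with $Y\in\{x<m\}\setminus C_{m,M}$ from reaching $\gamma_X$ on $\{x<m\}$, the paper performs a local modification of the PPP (the ``shield'' of closely spaced points along an arc surrounding $C_{m,M}$), using independence of $N$ on disjoint regions, so that any incoming path is routed around the cell and fed into one of the two outer paths. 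Together these two ingredients give $D_{R,\varepsilon}\cap B^{\delta}_{m,M}\subset F_{m,M}$ with positive probability. Your argument has no substitute for either the three-path trapping or the shield, and without them the conclusion does not follow.
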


\medskip

Let $L\in \N^*$. Let us consider the lattice $\mathcal{Z}_{L,m,M}$ consisting of the $(2L+1)^2$ points $z=(2mk,2M\ell)$ where $-L\leq k,\ell\leq L$, and the union $\mathcal{R}_{L,m,M}$ of nonoverlapping translated cells $C^{z}_{m,M}=z+C_{m,M}$ indexed by $z\in\mathcal{Z}_{L,m,M}$. We also define $F^z_{m,M}$ as the translated event of $F^{O}_{m,M}=F_{m,M}$ to the cell $C^{z}_{m,M}$.\\
Thanks to the translation invariance in distribution of the PPP $N$, all the $F^z_{m,M}$'s, for $z\in \mathcal{Z}_{L,m,M}$, have the same probability. Furthermore, if both events $F^z_{m,M}$ and $F^{z'}_{m,M}$ occur with $z\not=z'$ then there are in the DSF two disjoint infinite paths started respectively in $C^{z}_{m,M}$ and $C^{z'}_{m,M}$. Hence, the number of events $F^z_{m,M}$'s occurring simultaneously is smaller than the number $\eta_{L,m,M}$ of edges of the DSF going out of the rectangle $\mathcal{R}_{L,m,M}$. It follows that for any $L,m,M$:
$$
\E \left\lbrack \eta_{L,m,M} \right\rbrack \geq \E \left\lbrack \sum_{z\in\mathcal{Z}_{L,m,M}} \ind_{F^z_{m,M}} \right\rbrack = (2L+1)^2 \P \big( F_{m,M} \big) ~.
$$
Now, using the integers $m,M$ given by Lemma \ref{corol2}, the probability $\P\big(F_{m,M}\big)$ is positive. We deduce that the expectation $\E[\eta_{L,m,M}]$ grows at least as $L^2$. The contradiction comes from the next result (Lemma \ref{lemme:eta}) in which it is proved that $\E[\eta_{L,m,M}]$ is at most of order $L^{3/2}$. This results from the fact that the expected number of edges crossing the boundary of $\mathcal{R}_{L,m,M}$ should be of an order close to the perimeter of this rectangle. This concludes the proof of Theorem \ref{th1}.

\begin{lemme}
\label{lemme:eta}
For all $m,M\in \N^*$ there exists a constant $C>0$ (depending on $m,M$) such that for all $L\in \N^*$,
$$
\E [ \eta_{L,m,M} ] \leq C L^{3/2} ~.
$$
\end{lemme}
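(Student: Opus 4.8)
The quantity $\eta_{L,m,M}$ counts edges of the DSF that go out of the rectangle $\mathcal{R}_{L,m,M}$, i.e. edges $[X';X'']$ where $X' \in N \cap \mathcal{R}_{L,m,M}$ and its ancestor $X''\notin \mathcal{R}_{L,m,M}$. The plan is to bound this by controlling, for each vertex near the boundary, the length of the edge to its ancestor, since an edge can only cross $\partial\mathcal{R}_{L,m,M}$ if it is long enough to reach across. First I would note that $\mathcal{R}_{L,m,M}$ is (up to constants depending on $m,M$) a square of side $\asymp L$, with perimeter $\asymp L$. For a point $X=(x,y)\in N$, let $\ell(X)=|X-A(X)|$ be the distance to its ancestor $A(X)$; an outgoing edge from $X$ requires $\dist(X,\partial \mathcal{R}_{L,m,M})\le \ell(X)$ (more precisely, the segment $[X;A(X)]$ meets the complement, which forces $\ell(X)$ to be at least the distance from $X$ to the relevant side). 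So $\eta_{L,m,M}\le \#\{X\in N\cap\mathcal{R}_{L,m,M}:\ \ell(X)\ge \dist(X,\partial\mathcal{R}_{L,m,M})\}$, and taking expectations via the Mecke/Campbell formula,
\[
\E[\eta_{L,m,M}]\ \le\ \int_{\mathcal{R}_{L,m,M}} \P\big(\ell_0 \ge \dist(x,\partial\mathcal{R}_{L,m,M})\big)\,dx,
\]
where under the Palm measure $\ell_0$ is the distance from a point at position $x$ to the nearest point of $N$ with strictly larger abscissa.

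Next I would estimate the tail of $\ell_0$. For a point at the origin, the ancestor lies in the half-plane $\{x>0\}$, and $\P(\ell_0\ge t)$ is the probability that the half-disc $\{u:\ |u|\le t,\ u_x>0\}$, of area $\pi t^2/2$, contains no point of $N$; hence $\P(\ell_0\ge t)=e^{-\pi t^2/2}$. This Gaussian-type tail is the crucial input. Writing $d(x)=\dist(x,\partial\mathcal{R}_{L,m,M})$, the integral splits over ``slabs'' at distance $d$ from the boundary. The boundary has length $O(L)$, so the measure of $\{x:\ d(x)\in[d,d+dd]\}$ is $O(L)\,dd$ for $d$ up to $O(L)$, and we get
\[
\E[\eta_{L,m,M}]\ \le\ C_1\, L \int_0^{\infty} e^{-\pi d^2/2}\,dd\ =\ C_2\, L .
\]
This in fact gives the stronger bound $\E[\eta_{L,m,M}]=O(L)$, comfortably inside the claimed $O(L^{3/2})$; the extra room in the exponent $3/2$ presumably absorbs a cruder treatment or a slightly different definition of ``outgoing edge'' (an edge $[X';X'']$ with both endpoints possibly outside but the segment crossing $\mathcal{R}_{L,m,M}$, or edges counted from the children side), so I would double-check exactly which edges are counted and, if necessary, replace the sharp half-disc computation with a simpler bound that still yields $o(L^2)$.

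The main obstacle is handling edges that cross $\partial\mathcal{R}_{L,m,M}$ but whose endpoint $X'$ inside $\mathcal{R}_{L,m,M}$ is \emph{not} close to the boundary — this happens only if $\ell(X')$ is large, which is rare by the Gaussian tail, but one must be careful near the \emph{corners} of the rectangle and with edges that exit and re-enter. A clean way around this is: an outgoing edge $[X';X'']$ must intersect one of the $O(1)$ (for fixed $m,M$, $O(L)$ counting the side length but $4$ sides) sides of $\mathcal{R}_{L,m,M}$, say a horizontal or vertical segment of length $\asymp L$; charge the edge to the integer lattice point on that side nearest to the crossing, and observe that for the edge to cross there, $X'$ must lie within distance $\ell(X')$ of that side. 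Then a union bound over the $O(L)$ lattice points on $\partial\mathcal{R}_{L,m,M}$, combined with $\sup_x \int_{\R^2}\ind_{|u-x|\le t}\,e^{-\pi t^2/2}$-type estimates (equivalently $\E[\ell_0^2]<\infty$, which follows from the Gaussian tail), closes the argument. The only genuinely delicate bookkeeping is making the geometric ``an edge crossing the boundary is caught by a boundary lattice point within reach'' statement precise, and checking that the DSF edge lengths indeed have the claimed subexponential tail uniformly — both of which reduce to the elementary half-disc void-probability computation above together with stationarity of $N$.
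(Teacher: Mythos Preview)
Your approach is correct and in fact yields the sharper bound $\E[\eta_{L,m,M}]=O(L)$, but it is genuinely different from the paper's argument. The paper avoids the Campbell/Mecke machinery entirely: it fixes a threshold $\sqrt{L}$ and splits $\eta_{L,m,M}=\eta^{<}_L+\eta^{>}_L$ according to whether the exiting edge has length below or above $\sqrt{L}$. Short exiting edges must have an endpoint in a strip of width $\sqrt{L}$ around $\partial\mathcal{R}_{L,m,M}$, whose area is $O(L^{3/2})$, so $\E[\eta^{<}_L]=O(L^{3/2})$. Long exiting edges start at points of $N\cap\mathcal{R}_{L,m,M}$ whose open half-disc of radius $\sqrt{L}$ is empty; such points are pairwise at distance at least $\sqrt{L}$, so by a packing argument there are at most $O(L)$ of them. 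This is completely elementary (no Palm calculus, no tail computation) and explains why the exponent $3/2$ appears: it is the cost of the crude short/long dichotomy, not of the phenomenon itself. Your route trades that elementarity for the exact half-disc void probability $e^{-\pi t^2/2}$ and a coarea integration, which buys you the optimal linear bound. Note that the ``obstacles'' you flag (points far from the boundary, corners, exit-and-reenter) are already fully absorbed by your first inequality $\E[\eta_{L,m,M}]\le\int_{\mathcal{R}_{L,m,M}}e^{-\pi d(x)^2/2}\,dx$; the lattice-point charging you sketch afterwards is unnecessary.
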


\begin{proof}
Let us write the random variable $\eta_{L,m,M}$ as the sum
$$
\eta_{L,m,M}=\eta^{<}_{L} + \eta^{>}_{L}
$$
where $\eta^{<}_{L}$ and $\eta^{>}_{L}$ respectively denote the number of edges exiting the rectangle $\mathcal{R}_{L,m,M}$ and with lengths shorter and longer than $\sqrt{L}$. It suffices to prove that their expectations are of order $L^{3/2}$.\\
Each of the $\eta^{<}_{L}$ edges exiting $\mathcal{R}_{L,m,M}$ and shorter than $\sqrt{L}$ has an extremity in a strip $S_{L}$ of width $\sqrt{L}$ all around the rectangle $\mathcal{R}_{L,m,M}$. This means $\eta^{<}_{L}$ is smaller than the number of points of the PPP $N$ in $S_{L}$. Its expectation is upper bounded by the area of the strip $S_{L}$ which is of order $L^{3/2}$.\\
The number $\eta^{>}_{L}$ of edges exiting the rectangle $\mathcal{R}_{L,m,M}$ and longer than $\sqrt{L}$ is necessarily smaller than the number $\xi$ of points of $N\cap\mathcal{R}_{L,m,M}$ for which the distance to their ancestor is larger than $\sqrt{L}$. To each of these points $X$ is associated an open half disc centered at $X$ and with radius $\sqrt{L}$ in which there is no point of $N$. Otherwise, the ancestor of $X$ would be at a distance smaller than $\sqrt{L}$. To sum up, the points counted by $\xi$ all belong to the rectangle $\mathcal{R}_{L,m,M}$ and are at distance from each other larger than $\sqrt{L}$. Their number can not exceed the order $L$. So do $\E[\xi]$ and $\E[\eta^{>}_{L}]$. This proves the announced result.
\end{proof}

\subsection{Proof of the key lemma}
\label{section:P(F)>0}

This section is devoted to the proof of Lemma \ref{corol2}, \ie to prove that the event $F_{m,M}$ defined in (\ref{defF_mM})
occurs with positive probability under the hypothesis $\P(\mathcal{N}\geq 2)>0$. The proof can be divided into two steps. First, we prove (Lemma \ref{3paths}) that there exist positive integers $m,M$ such that with positive probability three disjoint infinite paths come from the cell $C_{m,M}$. Hence, the intermediate one, say $\gamma$, is trapped between the two other paths on the half-plane $\{x \geq m\}$. Then, a local modification of this event prevents any path $\gamma_Y$ with $Y\in \{x < m\}\setminus C_{m,M}$ to touch $\gamma$. Roughly speaking, we build a shield protecting $\gamma$ from other paths $\gamma_Y$'s. See Figure \ref{fig:shield}.\\

To show that with positive probability three disjoint infinite paths come from a cell $C_{m,M}$, we start with two disjoint paths (Lemma \ref{2paths}). For any positive integers $m,M$, let us denote by $E_{m,M}$ the east side of the cell $C_{m,M}$
\begin{equation}
E_{m,M} = \{ (x,y) \,;\, x = m \; \mbox{and} -M \leq y < M \}~.
\label{def:EmM}\end{equation}
Let $\delta>0$ and $m,\,M\in \N^*$ such that $m>\delta$. We define the event
\begin{equation*}
 A^\delta_{m,M}=\left\{\exists X,Y\in N\cap C_{m-\delta,M}\; \mbox{s.t.}\; \gamma_X\cap \gamma_Y=\emptyset \mbox{ and } (\gamma_X\cup \gamma_Y)\cap \partial C_{m,M} \subset E_{m,M} \right\}~,
\end{equation*}where $\partial C_{m,M}$ denotes the boundary of $C_{m,M}$. The event $A^\delta_{m,M}$ says that there exist two disjoint paths $\gamma_X$ and $\gamma_Y$ coming from points $X,Y\in N\cap C_{m-\delta,M}$ and exiting $C_{m,M}$ by its east side $E_{m,M}$. For technical reasons in the proof, we will require the parameter $\delta$ so that the two disjoint paths exit $C_{m,M}$ by crossing the rectangle delimited by the two segments $E_{m-\delta,M}$ and $E_{m,M}$.

\begin{lemme}
\label{2paths}If $\P(\mathcal{N}\geq 2)>0$ then, for all $\delta>0$, there exist $m,M\in \N^*$ with $m>\delta$ such that $\P\big(A^\delta_{m,M}\big)>0$.
\end{lemme}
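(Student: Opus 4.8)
The plan is to deduce the existence of two disjoint paths coming from a common cell $C_{m-\delta,M}$ and exiting $C_{m,M}$ by its east side, starting from the hypothesis $\P(\mathcal{N}\geq 2)>0$. Since $\mathcal{N}$ is a.s. constant by ergodicity, this hypothesis means that with probability one there exist (at least) two disjoint infinite paths in the DSF; pick two such paths, say $\gamma_{X_0}$ and $\gamma_{Y_0}$ with $X_0\neq Y_0$ in $N$. Because the paths are disjoint and each is infinite with direction $e_x$, by stationarity in distribution of $N$ (translating by an integer vector) one may translate the picture so that both starting vertices $X_0,Y_0$ fall into a bounded window; more precisely, for every $\eps>0$ there exist $m_0,M_0\in\N^*$ so that with probability at least $1-\eps$ there are two disjoint infinite paths issued from vertices lying in $C_{m_0,M_0}$. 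I will then enlarge the cell: since both paths have direction $e_x$, each of them eventually leaves the half-plane $\{x<m\}$ for $m$ large, hence crosses the vertical line $\{x=m\}$; by choosing $m$ large enough (relative to $M$ and to the positions of $X_0,Y_0$), with positive probability both paths exit the cell $C_{m,M}$ through its east side $E_{m,M}$ rather than through its top or bottom. This is where the parameter $M$ must be taken large as well, so that the two paths do not wander off through the horizontal sides before reaching abscissa $m$.

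More carefully, I would proceed as follows. First, fix any two disjoint infinite paths; call their starting vertices $X,Y$ and let $a=\max(\,\text{abscissa of }X,\ \text{abscissa of }Y\,)$. For a threshold $\rho>0$ let $G_\rho$ be the event that there exist two disjoint infinite paths $\gamma_X,\gamma_Y$ with $X,Y\in B(O,\rho)$. Then $\P(G_\rho)\to 1$ as $\rho\to\infty$ (using stationarity and the fact that $\mathcal{N}\geq 2$ a.s.), so we may fix $\rho$ with $\P(G_\rho)>3/4$, say. On $G_\rho$ the two paths, being infinite and directed, must both cross every vertical line $\{x=t\}$ for $t$ large; moreover on a further event of probability close to $1$ (obtained by a compactness/continuity argument, or simply by monotone convergence over $M$) both paths stay inside the horizontal strip $|y|<M$ until they first reach abscissa $m$, provided $M$ is chosen large enough and then $m$ is chosen large enough. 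On the intersection of these events, $(\gamma_X\cup\gamma_Y)\cap\partial C_{m,M}\subset E_{m,M}$, and $X,Y\in C_{m-\delta,M}$ as soon as $m-\delta>\rho$ and $M>\rho$. Hence $\P(A^\delta_{m,M})>0$ for this choice of $m,M$.

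The main obstacle is controlling the geometry of the two paths before they reach the east side: a priori a directed path can oscillate a lot in the $e_y$-direction, so I must rule out (with positive probability, which is all that is needed) that either path leaves $C_{m,M}$ through its top or bottom side before crossing $\{x=m\}$, and also that the two paths meet. Keeping them disjoint is automatic once we have selected two disjoint infinite paths — disjoint infinite paths never coalesce, by the trivial dichotomy recalled in Section \ref{proofpart1}. Confining them to the strip $|y|<M$ up to abscissa $m$ is handled by first choosing $M$ large (each path, restricted to abscissas in $[\rho,m]$, has bounded excursions with positive probability, and these probabilities increase to $1$ as $M\to\infty$ for fixed $m$), and only afterwards fixing $m$; the quantifier order "for all $\delta$, there exist $m,M$" gives exactly the needed freedom. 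Finally, the constraint $m>\delta$ and $X,Y\in C_{m-\delta,M}$ is met by taking $m$ larger than $\rho+\delta$.
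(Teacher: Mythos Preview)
Your overall approach coincides with the paper's: locate a bounded cell containing starting points of two disjoint infinite paths, then enlarge the cell so that both paths exit through its east side. The paper does this by a countable union over cells $C_{m,M}$ followed by monotone convergence in the height $R$; you do it via ergodicity and a ball $B(O,\rho)$, but the skeleton is the same.

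There is, however, one genuine gap. You assert that each path ``eventually leaves the half-plane $\{x<m\}$'' (equivalently, that the abscissas of the successive ancestors tend to $+\infty$), and your entire enlargement step rests on this. It is not automatic: successive ancestors have strictly increasing abscissas, but an increasing sequence need not be unbounded, and the history-dependence of the DSF (the forbidden half-disks carved out by earlier steps, cf.\ Figure~\ref{fig:notmarkov}) rules out any naive renewal argument. The paper addresses exactly this point by invoking Theorem~4.6 of \cite{baccellibordenave}, which gives a regenerative structure for the ancestor sequence (stopping times $(\theta_k)$ with i.i.d.\ blocks), from which the almost-sure exit from every half-plane $\{x<c\}$ follows. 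Without this input your monotone-convergence step has no finite piece of path to fit into the strip $|y|<M$.

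A minor second point: your quantifier order is inconsistent---you write both ``$M$ large and then $m$ large'' and ``$M\to\infty$ for fixed $m$''. Once the divergence of abscissas is secured, the clean order (as in the paper) is: fix $m>\rho+\delta$; the portions of $\gamma_X,\gamma_Y$ in $\{x\le m\}$ are then a.s.\ bounded sets; finally take $M$ large enough that these sets lie in $|y|<M$, which forces $(\gamma_X\cup\gamma_Y)\cap\partial C_{m,M}\subset E_{m,M}$.
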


\begin{proof}
The inequalities
\begin{eqnarray*}
0< \P( \mathcal{N}\geq 2) & = & \P \Big( \exists m,M \in \N^* , \; \exists X,Y \in N \cap C_{m,M}, \; \gamma_X \cap \gamma_Y = \emptyset \Big) \\
& \leq & \sum_{m,M \in \N^*} \P \Big( \exists X,Y \in N \cap C_{m,M} , \; \gamma_X \cap \gamma_Y = \emptyset \Big)
\end{eqnarray*}
imply that at least one of the terms of the above sum is positive. Let $m,M\in\N^*$ be the corresponding deterministic indices.

Let $X_0=X,X_1,X_2,\dots$ be the sequence of successive ancestors of $X\in N$ in the DSF with respect to direction $e_x$. Theorem 4.6 of \cite{baccellibordenave} puts forward the existence of an increasing sequence of finite stopping times $(\theta_k)_{k\geq 0}$ such that $\theta_0=0$ and the vectors $(X_{\theta_k}-X_{\theta_{k}+1},\dots, X_{\theta_{k+1}-1}-X_{\theta_{k+1}})_{k>0}$ are i.i.d. Consequently, $\gamma_X=\{X,X_1,X_2,\dots\}$ a.s. eventually goes out of the half-plane $\{x < m+\delta\}$. So, 
\begin{multline}
0< \P \Big( \exists X,Y \in N \cap C_{m,M} \; \mbox{ s.t.} \; \gamma_X \cap \gamma_Y = \emptyset \Big) \\
= \lim_{R\to+\infty} \P \left(
\begin{array}{c}
\exists X,Y \in N \cap C_{m,M} \; \mbox{ s.t.} \; \gamma_X \cap \gamma_Y = \emptyset \\
\mbox{and} \; (\gamma_X \cup \gamma_Y) \cap \partial C_{m+\delta,R}\subset E_{m+\delta,R}
\end{array}
\right) ~.\label{etape4}
\end{multline}Thus, there exists an integer $R$ large enough so that the probability in the right hand side of \eqref{etape4} is positive. Replacing $M$ by $\max(R,M)$ and $m$ by $m-\delta$ provides the announced result.
\end{proof}

We are now able to state a result similar to Lemma \ref{2paths}, but with three paths instead of two. Let us introduce, for $\delta>0$ and $m,M\in \N^*$ such that $m>\delta$, the event $B_{m,M}^\delta$ defined as follows: there exist three disjoint paths $\gamma_X$, $\gamma_Y$ and $\gamma_Z$ coming from points $X,Y,Z\in N\cap C_{m-\delta,M}$ and exiting the cell $C_{m,M}$ by its east side $E_{m,M}$.

\begin{lemme}
\label{3paths}
If $\P(\mathcal{N}\geq 2)>0$ then, for all $\delta>0$, there exist $m,M\in \N^*$ with $m>\delta$ such that $\P\big(B^\delta_{m,M}\big)>0$.
\end{lemme}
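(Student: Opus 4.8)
The plan is to derive Lemma~\ref{3paths} from Lemma~\ref{2paths} by forcing a third path into the gap between the two disjoint paths it provides, through a local modification of $N$. By ergodicity of the PPP we may assume $\mathcal N\ge 2$ almost surely. Fix $\delta>0$ and let $m>\delta$, $M\in\N^*$ be integers with $\P\big(A^\delta_{m,M}\big)>0$, as given by Lemma~\ref{2paths}. On $A^\delta_{m,M}$ choose, measurably, disjoint paths $\gamma_X,\gamma_Y$ realizing the event, with $\gamma_X$ above $\gamma_Y$ throughout the strip $\{m-\delta\le x\le m\}$. Each path is monotone in the abscissa, crosses the two lines $\{x=m-\delta\}$ and $\{x=m\}$ and stays inside $C_{m,M}$ until exiting through $E_{m,M}$; hence the two arcs they draw in the strip, completed by the two vertical segments joining their endpoints on $\{x=m-\delta\}$ and on $\{x=m\}$, bound an open ``corridor'' $\mathcal C\subset[m-\delta,m]\times(-M,M)$, of positive width across the strip. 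If a point of $N$ sits just left of the left opening of $\mathcal C$ and its first ancestor falls inside $\mathcal C$, its path enters $\mathcal C$; since two paths of the DSF never cross, it is then confined between $\gamma_X$ and $\gamma_Y$, must leave $C_{m,M}$ by crossing $\{x=m\}$ at an ordinate strictly between those of $\gamma_X$ and $\gamma_Y$, hence through $E_{m,M}$, and its source lies in $C_{m-\delta,M}$. Producing such a point with positive probability is exactly $\P(B^\delta_{m,M})>0$.

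To produce it, I would first replace the random geometry by a deterministic one. The ordinates at which $\gamma_X,\gamma_Y$ meet $\{x=m-\delta\}$, a lower bound for the width of $\mathcal C$ across the strip, and an upper bound for the lengths of the edges of $\gamma_X$ and $\gamma_Y$ whose left endpoint has abscissa at most $m$, are all measurable functions of $N$; by sorting their values into countably many boxes and using $\P(A^\delta_{m,M})>0$, one of these ``frames'' has positive probability. Restricted to that frame, there is a fixed ball $D\subset C_{m-\delta,M}$, lying just left of a fixed sub-segment of the left opening of $\mathcal C$, at distance bounded below from every admissible position of $\gamma_X\cup\gamma_Y$, and such that the frame event is, up to a negligible set, measurable with respect to $N$ outside $D$. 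Conditionally on $N$ outside $D$ and on the frame, the Poisson property then gives positive probability that $N\cap D$ consists of a short chain of points, starting at a well-placed $Z\in C_{m-\delta,M}$, that pins the first steps of $\gamma_Z$ so as to make it enter $\mathcal C$. Emptying and repopulating $D$ leaves $\gamma_X$ and $\gamma_Y$ unchanged: their vertices and edges avoid $D$, deleting points off a path never alters it, and the bound on edge lengths together with the distance from $D$ to $\gamma_X\cup\gamma_Y$ guarantees that no inserted point becomes a new ancestor along $\gamma_X$ or $\gamma_Y$. On the modified configuration $\gamma_X,\gamma_Y,\gamma_Z$ come from $C_{m-\delta,M}$, exit $C_{m,M}$ through $E_{m,M}$, and $\gamma_Z$ is trapped between the other two; this gives $\P(B^\delta_{m,M})>0$.

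Two points are delicate and I expect the real work to be there. First, the modification must be legitimate even though the corridor's position, and the paths $\gamma_X,\gamma_Y$ themselves, depend on $N$ globally: this is the very difficulty emphasized in the introduction, and it is what the passage through frames is designed to handle. Second -- and this is the main obstacle -- one must ensure that $\gamma_Z$ is disjoint from $\gamma_X$ and from $\gamma_Y$ as infinite paths, not merely inside $C_{m,M}$; the no-crossing property confines $\gamma_Z$ to the corridor, but ruling out that $\gamma_Z$ eventually coalesces with one of the two bounding paths needs an extra input, since a bounded modification alone cannot change the eventual coalescence pattern of infinite paths. It is here that the two-path information of Lemma~\ref{2paths} has to be combined with the standing assumption and with the geometry -- keeping the corridor wide and $\gamma_Z$ deep inside it -- in order to split off a genuinely new infinite line.
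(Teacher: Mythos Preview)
Your proposal has a genuine gap, and you identify it yourself in the last paragraph: even if you succeed in threading a path $\gamma_Z$ into the corridor between $\gamma_X$ and $\gamma_Y$, nothing in your construction prevents $\gamma_Z$ from coalescing with $\gamma_X$ or $\gamma_Y$ somewhere on $\{x>m\}$. A bounded local modification of $N$ cannot change the eventual coalescence pattern of infinite paths, so if for instance $\mathcal N=2$ almost surely (which is consistent with the standing assumption $\P(\mathcal N\ge2)>0$), every path you plant between $\gamma_X$ and $\gamma_Y$ will merge with one of them. Your closing remark that ``the two-path information of Lemma~\ref{2paths} has to be combined with the standing assumption and with the geometry'' is precisely where the argument is missing; you have not supplied any mechanism that forces a genuinely new infinite line to appear.

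The paper's proof avoids this difficulty by a completely different route. Instead of inserting a third path by hand, it translates the two-path event $A^\delta_{m,M}$ vertically: the events $A^{\delta,\ell}_{m,M}$ all have the same positive probability, so by a pigeonhole argument two of them must intersect with positive probability. On that intersection there are four infinite paths $\gamma_1,\gamma_2,\gamma_3,\gamma_4$, ordered by the ordinate at which they cross $\{x=m\}$; the east-exit condition and planarity then force $\gamma_1$ to be disjoint from $\gamma_3$ and $\gamma_4$ (since $\gamma_1$ cannot touch $\gamma_2$), so at least three of the four are pairwise disjoint. Enlarging $M$ to swallow both cells gives $\P(B^\delta_{m,M})>0$. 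The point is that the third disjoint path is not manufactured locally but extracted from a second, independent occurrence of the two-path event, and planarity converts ``two pairs of disjoint paths'' into ``three pairwise disjoint paths''. This is the idea your approach is missing.
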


Let $\gamma_X$, $\gamma_Y$ and $\gamma_Z$ be the disjoint paths given by $B^\delta_{m,M}$. The parameter $\delta$ ensures that one of them, say $\gamma_X$, is trapped between $\gamma_Y$ and $\gamma_Z$ on $\{x\geq m-\delta\}$ and not only on $\{x\geq m\}$. This precaution is needed to get the inclusion (\ref{inclusionfinale}) in the sequel. Remark also, $\delta=1$ will be sufficient to prove Theorem \ref{th1}. Lemma \ref{3paths} (with any positive $\delta$) will be used to prove Theorem \ref{th1boolean}, in the next section.

\begin{proof}
Let $m,M$ be the positive integers given by Lemma \ref{2paths}. First, for any integer $\ell\in\Z$, let us consider the translated event $A_{m,M}^{\delta,\ell}$ of $A_{m,M}^{\delta,0}=A_{m,M}^\delta$ to the cell $(0,2M\ell)+C_{m,M}$, \ie there exist two disjoint paths $\gamma_X$ and $\gamma_Y$ coming from points $X,Y\in N\cap ((0,2M\ell)+C_{m-\delta,M})$ and exiting $(0,2M\ell)+C_{m,M}$ by its east side. By stationarity, all these events have the same probability. Moreover, if we assume that for all $\ell,k\in\Z$, the probabilities $\P\big(A_{m,M}^{\delta,\ell}\cap A_{m,M}^{\delta,k}\big)$ are null then the inequality
$$
1 \geq \P \left( \bigcup_{-n\leq\ell\leq n} A_{m,M}^{\delta,\ell} \right) = \sum_{-n\leq\ell\leq n} \P \big( A_{m,M}^{\delta,\ell} \big) = (2n+1) \P \big( A_{m,M}^\delta \big)
$$
leads to a contradiction as $n$ tends to infinity, since $\P\big(A_{m,M}^\delta\big)>0$ (Lemma \ref{2paths}). Consequently, $\P\big(A_{m,M}^{\delta,\ell}\cap A_{m,M}^{\delta,k}\big)>0$ for some $\ell,k\in\Z$.\\
On this event, there exist four infinite paths started in the cells $(0,2M\ell)+C_{m-\delta,M}$ and $(0,2Mk)+C_{m-\delta,M}$, leaving the cells $(0,2M\ell)+C_{m,M}$ and $(0,2Mk)+C_{m,M}$ through their east sides. We denote them by $\gamma_1,\gamma_2,\gamma_3,\gamma_4$ according to the ordinate at which they intersect the axis $\{x=m\}$. See Figure \ref{fig:3paths}. Now, among these four paths, at least three are disjoint. Indeed, the path $\gamma_1$ cannot touch $\gamma_3$, otherwise by planarity, it will necessarily touch $\gamma_2$ which is forbidden on the event $A_{m,M}^{\delta,\ell}\cap A_{m,M}^{\delta,k}$. As a consequence, $\gamma_1$ cannot touch $\gamma_4$.\\
It suffices to replace $M$ with $\max((2|\ell|+1)M,(2|k|+1)M)$ to conclude.
\end{proof}

\begin{figure}[!ht]
\begin{center}
\psfrag{x}{\small{$x=0$}}
\psfrag{1}{\small{$\gamma_1$}}
\psfrag{2}{\small{$\gamma_2$ and $\gamma_3$}}
\psfrag{4}{\small{$\gamma_4$}}
\includegraphics[width=10cm,height=8cm]{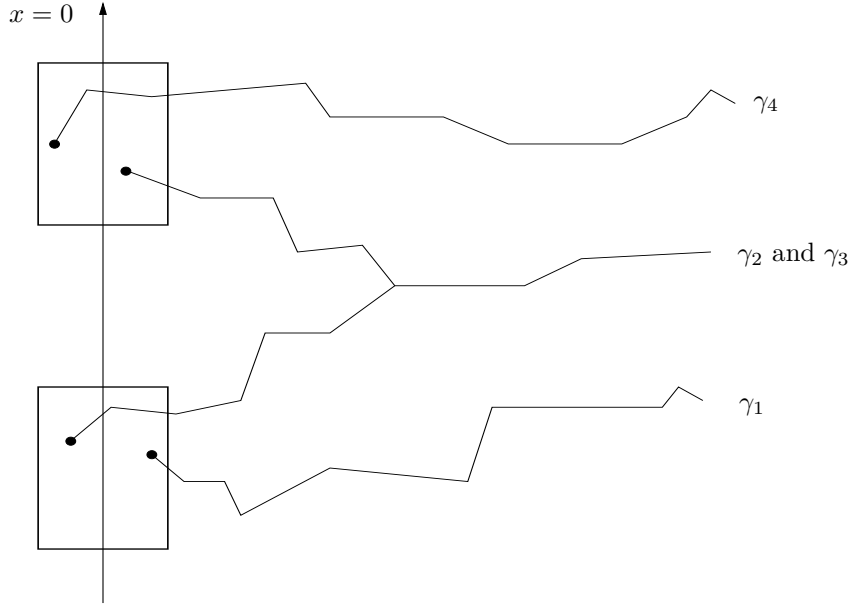}
\end{center}
\caption{\label{fig:3paths} {\small \textit{Here is a representation of the fourth paths $\gamma_1,\gamma_2,\gamma_3,\gamma_4$ corresponding to the event $A_{m,M}^{\ell}\cap A_{m,M}^{k}$. On the axis $x=0$, the two rectangles are the cells $(0,2M\ell)+C_{m,M}$ and $(0,2Mk)+C_{m,M}$. While the paths $\gamma_2$ and $\gamma_3$ coalesce, it remains three paths among $\gamma_1,\gamma_2,\gamma_3,\gamma_4$ which are disjoint.}}}
\end{figure}

The condition that the paths leave the cell $C_{m,M}$ through its east side is essential to obtain a contradiction in the previous proof. Indeed, one could imagine that the path $\gamma_2$ leaves the cell $(0,2M\ell)+C_{m,M}$ through its north side and goes into $(0,2Mk)+C_{m,M}$ (with $\ell<k$) through its south side in order to slip between $\gamma_3$ and $\gamma_4$. In this case, there is nothing to prevent the paths $\gamma_1$ and $\gamma_3$, respectively $\gamma_2$ and $\gamma_4$, from coalescing.\\

It is time to prove Lemma \ref{corol2}. The integers $m,M$ given by Lemma \ref{3paths} and for which $\P\big(B_{m,M}^{\delta}\big)>0$ will provide $\P(F_{m,M})>0$. Our idea is to modify the event $B_{m,M}^{\delta}$ by building a shield that will protect the cell $C_{m,M}$ and in particular the three disjoint paths given by $B^{\delta}_{m,M}$. \\
For that purpose and for the rest of this section, we denote by $B_{m,M}^{\delta}(\cdot)$ to specify which point process satisfies the event $B_{m,M}^{\delta}$.\\

Let $R$ be a real number larger than $2\max\{m,M\}$. Let us denote by $\Lambda_{m,M}^{R}$ the set
\begin{equation}
\Lambda_{m,M}^{R} = \Big( B(O,R)  \cap \{ x<m \} \Big) \setminus C_{m,M} ~.
\label{def:Lambda}\end{equation}
Now, let us put small balls in the set $\Lambda_{m,M}^{R}$ throughout the circle centered at the origin and with radius $R$. Let $\varepsilon$ be a small positive real number and $X_{0}=(-R+\varepsilon,0)$. We define a sequence $X_{0},X_{1},\ldots$ of points of $\R^2$ such that for any integer $k\geq 1$, $X_{k+1}$ has a positive ordinate and satisfies $|X_{k+1}-X_{k}|=1$ and $|X_{k+1}|=R-\varepsilon$. Let $n=n(m,M,R,\varepsilon)$ be the smallest index $k$ such that $X_{k}$ belongs to the set $\{x\geq-m\}$. Actually:
$$
X_k=(R-\varepsilon)e^{\i k\alpha},\quad k\leq n(m,M,R,\varepsilon)\;\mbox{ and }\;\alpha=2\mbox{arcsin}\Big(\frac{1}{2(R-\varepsilon)}\Big) ~.
$$
Let us add a last point $X_{n+1}$ with positive ordinate, abscissa equal to $m-\delta/2-\varepsilon$ and such that $|X_{n+1}|=R-\varepsilon$. By symmetry with respect to the axis $y=0$, we define the sequence $X_{-1},\ldots,X_{-n},X_{-n-1}$. The balls centered at the $X_k$'s, $-n-1\leq k\leq n+1$, with radius $\varepsilon$ are all included in $\Lambda_{m,M}^{R}$ and form a shield protecting the cell $C_{m,M}$. See Figure \ref{fig:shield}.\\

\begin{figure}[!ht]
\begin{center}
\psfrag{1}{\small{$\gamma_Y$}}
\psfrag{2}{\small{$\gamma_X$}}
\psfrag{3}{\small{$\gamma_Z$}}
\includegraphics[width=13cm,height=11cm]{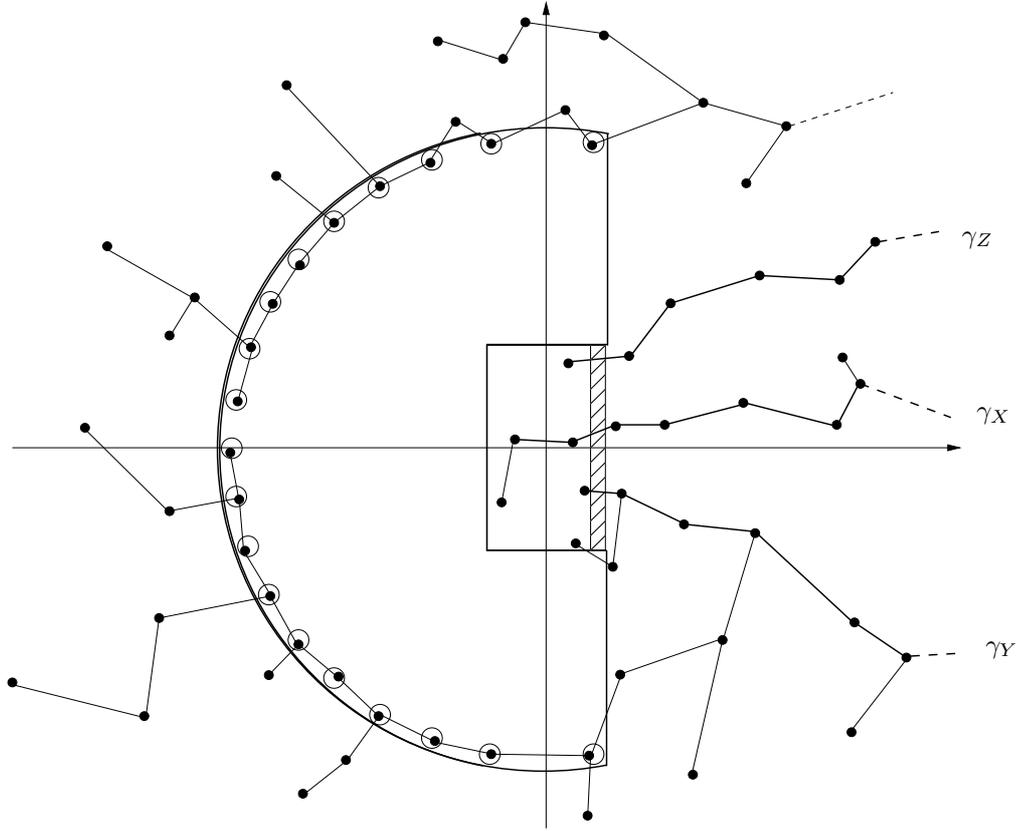}
\end{center}
\caption{\label{fig:shield} {\small \textit{Here are the set $\Lambda^R_{m,M}$ and the Directed Spanning Forest constructed on the PPP $N$ satisfying the event $D_{R,\varepsilon}\cap B_{m,M}^{\delta}$. The disjoint infinite paths $\gamma_X,\ \gamma_Y,\ \gamma_Z$ of the event $B^\delta_{m,M}$ are represented in bold. The points $X,\ Y,\ Z\in C_{m-\delta,M}$ are in gray. The hatched area is the strip of width $\delta$ between $E_{m-\delta,M}$ and $E_{m,M}$ that the paths $\gamma_X,\ \gamma_Y$ and $\gamma_Z$ have to cross.}}}
\end{figure}

\noindent
Let $D_{R,\varepsilon}(N)$ be the event such that each ball $B(X_k,\varepsilon)$, for $-n-1\leq k\leq n+1$, contains exactly one point of the PPP $N$, and the rest of $\Lambda_{m,M}^{R}$ is empty. This event occurs with positive probability for any $R,\varepsilon$. In the sequel, we will denote by $Y_k$ the point contained in the ball $B(X_k,\varepsilon)$, $-n-1\leq k\leq n+1$.\\
Let $N^{in}$ and $N^{out}$ be two independent PPP with respective intensities 1 on $\Lambda^R_{m,M}$ and on $\R^2\setminus \Lambda^R_{m,M}$, then their superposition $N^{in}+N^{out}$ is a PPP of intensity 1 on the whole plane. Hence,
\begin{equation}
\label{P1>0}
\P \big( D_{R,\varepsilon}(N^{in}) \big) = \P \big( D_{R,\varepsilon}(N^{in}+N^{out}) \big) = \P \big( D_{R,\varepsilon}(N) \big)  > 0 ~.
\end{equation}
The first equality in \eqref{P1>0} comes from the fact that the realization of $D_{R,\varepsilon}(N^{in}+N^{out})$ depends only on points in the set $\Lambda^R_{m,M}$, and the second equality is due to the fact that $N$ and $N^{in}+N^{out}$ have the same distribution.\\
Since deleting the points of $N^{in}$ does not affect the existence of the three paths $\gamma_X$, $\gamma_Y$ and $\gamma_Z$ of $B^\delta_{m,M}(N^{in}+N^{out})$ when this event is realized, we have $B_{m,M}^{\delta}(N^{in}+N^{out})\subset B_{m,M}^{\delta}(N^{out})$ and
\begin{equation}
\label{P2>0}
\P \big( B_{m,M}^{\delta}(N^{out}) \big) \geq \P \big( B_{m,M}^{\delta}(N^{in}+N^{out}) \big)=\P \big( B_{m,M}^{\delta}(N) \big)  > 0 ~.
\end{equation}
Moreover, if $B_{m,M}^{\delta}(N^{out})$ is satisfied, so is $B_{m,M}^{\delta}(N^{in}+N^{out})$ provided the points of $N^{in}\cap\{-m\leq x< m\}$ do not modify the DSF in the cell $C_{m,M}$. This is false in general. This becomes true as soon as $D_{R,\varepsilon}(N^{in})$ is satisfied with $R>2\sqrt{(2m)^2+(2M)^2}+1$ and $\varepsilon<1/2$, thanks to the following result.

\begin{lemme}
\label{lengthedge}
Let $m,M$ be positive integers and assume that at least two edges in the DSF start in the cell $C_{m,M}$ and exit it through its east side. Then, any edge whose west vertex belongs to the cell $C_{m,M}$ has a length smaller than $2\sqrt{(2m)^2+(2M)^2}$.
\end{lemme}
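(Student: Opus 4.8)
\medskip
\noindent\textbf{Sketch of a proof.}
The plan is to argue by contradiction, exploiting only the defining emptiness property of the DSF: for every vertex $Z\in N$ with ancestor $Z'$, the open half-disc
$$H(Z):=\{w\in\R^2:\ |w-Z|<|Z-Z'|,\ x_w>x_Z\}$$
contains no point of $N$, since any such point would be an admissible ancestor of $Z$ closer than $Z'$. Throughout, write $\rho:=\sqrt{(2m)^2+(2M)^2}$ for the length of the diagonal of $C_{m,M}$.

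First I would record three elementary facts. (i) Since $C_{m,M}=[-m,m)\times[-M,M)$ is half-open and none of the corners $(\pm m,\pm M)$ realizing its diameter belongs to it, any two points of $C_{m,M}$ are at distance \emph{strictly} less than $\rho$. (ii) If an edge $[A,A']$ has its west vertex $A$ in $C_{m,M}$ and exits $C_{m,M}$ through its east side, then $[A,A']$ meets the line $\{x=m\}$; as $x_A<m$, this forces $x_{A'}\geq m$. (iii) Almost surely no two points of $N$ share an abscissa; moreover each vertex has a single outgoing edge, so the hypothesis yields at least two \emph{distinct} vertices of $C_{m,M}$ from which an edge exits east.

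Now fix an arbitrary $X\in C_{m,M}$, let $X'$ be its ancestor, and suppose for contradiction that $|X-X'|\geq 2\rho$. Using (iii) I choose an edge $[A,A']$ exiting $C_{m,M}$ eastward with $A\neq X$. By (ii), $x_{A'}\geq m>x_X$, so $A'$ is an admissible ancestor of $X$; since $X'$ minimizes the distance among admissible ancestors, $|X-A'|\geq|X-X'|\geq 2\rho$, whence by the triangle inequality and (i), $|A-A'|\geq|X-A'|-|X-A|>2\rho-\rho=\rho$. It remains to compare the abscissas of $A$ and $X$, which by (iii) are a.s. distinct. If $x_A>x_X$, then $A\in N$ lies in $H(X)$, because $|X-A|<\rho<2\rho\leq|X-X'|$ and $x_A>x_X$ --- impossible; hence $x_A<x_X$. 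But then $X\in N$ lies in $H(A)$, since $x_X>x_A$ and $|A-X|<\rho<|A-A'|$ --- again impossible. This contradiction shows $|X-X'|<2\rho$, which is the claim.

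The argument is short once the emptiness property is in hand. The point requiring care --- and the reason the hypothesis asks for \emph{two} east-exiting edges rather than one --- is the choice of the auxiliary edge with $A\neq X$: when $[X,X']$ itself happens to be an east-exiting edge, its length genuinely cannot be bounded by that edge alone, and one needs a second east-exiting edge to supply a nearby admissible ancestor for $X$. Keeping every inequality strict (hence remark (i) and the half-openness of the cell) is the other small thing to watch.
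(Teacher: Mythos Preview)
Your proof is correct and follows essentially the same route as the paper's: both use the emptiness of the half-disc $H(Z)$ (equivalently, the minimality of the ancestor) together with a case split on which of the two vertices $X$ and $A$ has the larger abscissa. The only cosmetic difference is that you package the argument as a contradiction from $|X-X'|\geq 2\rho$ and first deduce $|A-A'|>\rho$ via the triangle inequality, whereas the paper argues directly, bounding $|X-X'|$ by $\rho$ when $x_X<x_{X'}$ and by $|X-X'|+|X'-Y'|\leq 2\rho$ when $x_X>x_{X'}$; the underlying comparisons are the same.
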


\noindent
Lemma \ref{lengthedge} will be proved at the end of this section. To sum up, for $R$ and $\varepsilon$ as above;
\begin{equation}
\label{intersection}
D_{R,\varepsilon}(N^{in}) \cap B_{m,M}^{\delta}(N^{out}) = D_{R,\varepsilon}(N^{in}+N^{out}) \cap B_{m,M}^{\delta}(N^{in}+N^{out}) ~.
\end{equation}
Combining with (\ref{P1>0}), (\ref{P2>0}) and (\ref{intersection}) it follows:
\begin{eqnarray*}
\P \big( D_{R,\varepsilon}(N) \cap B_{m,M}^{\delta}(N) \big)
& = & \P \big(   D_{R,\varepsilon}(N^{in}+N^{out}) \cap B_{m,M}^{\delta}(N^{in}+N^{out}) \big) \\
& = & \P \big( D_{R,\varepsilon}(N^{in}) \cap  B_{m,M}^{\delta}(N^{out}) \big) \\
& = & \P \big( D_{R,\varepsilon}(N^{in}) \big) \P \big(  B_{m,M}^{\delta}(N^{out}) \big)  \; > \; 0 ~.
\end{eqnarray*}
To conclude the proof, it remains to prove the inclusion
\begin{equation}
\label{inclusionfinale}
D_{R,\varepsilon} \cap B_{m,M}^{\delta} \subset F_{m,M} ~.
\end{equation}
Let $\gamma_X,\gamma_Y,\gamma_Z$ be the three disjoint infinite paths given by $B_{m,M}^{\delta}$. Let us denote by $\gamma_X$ the path which intersects the axis $\{x=m\}$ at the intermediate ordinate. On the one hand, we choose $\varepsilon>0$ small enough so that the abscissa of $Y_{k}$, $0\leq k\leq n$, is smaller than that of $Y_{k+1}$. Hence, $Y_{k}$ will prefer to have $Y_{k+1}$ as ancestor rather than a point of $N\cap C_{m,M}$. Similarly $Y_{-k}$ will prefer to have $Y_{-k-1}$ as ancestor. On the other hand, since the abscissa of $Y_{n+1}$ is larger than $m-\delta$ (with $\varepsilon<\delta/4$), $Y_{n+1}$ cannot have a point of $\gamma_X$ as ancestor. Otherwise, the path of $Y_{n+1}$ would cross $\gamma_Y$ or $\gamma_Z$ which is impossible. Similarly, $Y_{-n-1}$ cannot have a point of $\gamma_X$ as ancestor. Henceforth, assume the event $D_{R,\varepsilon}\cap B_{m,M}^{\delta}$ satisfied. The previous construction prevents any path $\gamma_A$ in the DSF with $A\in N \cap \{x<m\}\setminus C_{m,M}$ from touching $\gamma_X$ on $\{x\leq m\}$. It can neither coalesce with $\gamma_X$ on $\{x>m\}$ since on the half-plane $\{x>m-\delta\}$, $\gamma_X$ is trapped between $\gamma_Y$ and $\gamma_Z$. This leads to (\ref{inclusionfinale}) and concludes the proof of Lemma \ref{corol2}.\\

The section ends with the proof of Lemma \ref{lengthedge}.

\begin{proof}
First remark that an edge whose both vertices belong to the cell $C_{m,M}$ has necessarily a length smaller than $\sqrt{(2m)^2+(2M)^2}$. Now, let us focus on an edge $\{X,Y\}$ in the DSF such that $X\in C_{m,M}$ and $Y\notin C_{m,M}$. By hypothesis, there exists another edge $\{X',Y'\}$ such that $X'\in C_{m,M}$ and $Y'\in\{x\geq m\}$. If the abscissa of $X$ is smaller than that of $X'$ then $X'$ could be the ancestor of $X$. So,
$$
|X-Y| \leq |X-X'| \leq \sqrt{(2m)^2+(2M)^2} ~.
$$
Otherwise, the same argument leads to $|X'-Y'| \leq \sqrt{(2m)^2+(2M)^2}$. Moreover, $Y'$ could be an ancestor of $X$. So,
$$
|X-Y| \leq |X-Y'| \leq |X-X'| + |X'-Y'| \leq 2 \sqrt{(2m)^2+(2M)^2} ~.
$$
This concludes the proof.\end{proof}

\section{DSF on a PPP with random holes}
\label{sectiontrous}

Let us now consider a Boolean model $\Gamma$ where the germs are distributed following a PPP $Q$ with intensity $\lambda>0$, independent of the PPP $N$, and where the grains are balls with fixed radius $r>0$:
\begin{equation}
\Gamma=\bigcup_{X\in Q} B(X,r) \label{modeleBooleen}
\end{equation}
(see \eg \cite{molchanov,schneiderweil} for an exposition). We delete the points of $N$ that belong to $\Gamma$. This amounts to considering the DSF with direction $e_x$ and whose vertices are the points of $N\cap \Gamma^c$ where $\Gamma^c=\R^2\setminus \Gamma$. Intuitively, the holes created by $\Gamma$ act like obstacles, which are difficult to cross and may prevent the coalescence of different paths. Is such a DSF still a tree ? The answer is still ``yes''.

\begin{figure}[!ht]
\begin{center}
\includegraphics[width=6cm,height=6cm]{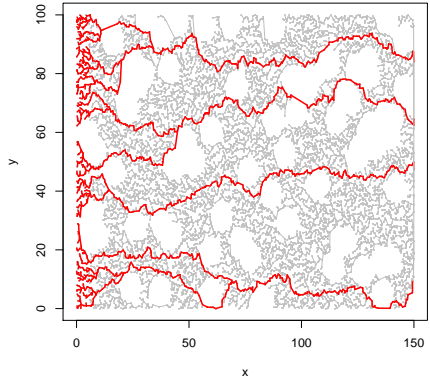}
\caption{{\small \textit{Simulations of the Directed Spanning Forest with Boolean holes. The paths with direction $e_{x}$ and coming from vertices with abscissa $0\leq x\leq 5$ and their ancestors are represented in bold red lines. We see on (a) that the Boolean holes mostly deviate the paths of the DSF, as in abscissa 50 ordinate 55. However, when a path is trapped, as in abscissa 100 ordinate 70, it has to cross the hole.}}}\label{fig2}
\end{center}
\end{figure}

\begin{theorem}
\label{th1boolean}
The DSF constructed on $N\cap \Gamma^c$ is almost surely a tree.
\end{theorem}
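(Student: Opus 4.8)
The plan is to transcribe the proof of Theorem~\ref{th1}, replacing $N$ by the point process $\widetilde N:=N\cap\Gamma^{c}$ and working on the probability space carrying the independent pair $(N,Q)$; the $\Z^{2}$-translations used in Section~\ref{proof} preserve the law of $(N,Q)$ and act ergodically on it (cf.~\cite{MR}). Write $\mathcal N$ for the number of disjoint infinite paths of the DSF on $\widetilde N$; it is a.s.\ constant, and we assume $\P(\mathcal N\ge 2)>0$ for contradiction. The endgame of Section~\ref{proofpart1} is unaffected: with $\mathcal R_{L,m,M}$, $\eta_{L,m,M}$, $F_{m,M}$ now read for the DSF on $\widetilde N$, the surface/perimeter inequality $\E[\eta_{L,m,M}]\ge (2L+1)^{2}\,\P(F_{m,M})$ still holds, and Lemma~\ref{lemme:eta} and Lemma~\ref{lengthedge} remain valid verbatim, since their proofs are purely geometric and concern the DSF built on an arbitrary locally finite point set; in particular the control of long edges uses only that two vertices of $\widetilde N$ whose distances to their ancestors both exceed $\sqrt{L}$ lie at mutual distance $\ge\sqrt{L}$ (the empty-forward-half-disc argument does not care that some points of $N$ have been deleted). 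So everything reduces to the $\widetilde N$-versions of Lemmas~\ref{2paths}, \ref{3paths} and \ref{corol2}.

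The proofs of Lemmas~\ref{2paths} and \ref{3paths} use, beyond planarity and stationarity, only that a path $\gamma_{X}$ a.s.\ leaves every half-plane $\{x<a\}$. I would re-establish this for the DSF on $\widetilde N$ by adapting the regeneration structure of Baccelli--Bordenave (Theorem~4.6 of~\cite{baccellibordenave}): call $\theta$ a regeneration abscissa when, in a window of fixed size straddling $\{x=\theta\}$, the points of $\widetilde N$ and the germs of $Q$ are configured so that the path is pinched through a single point of $\widetilde N$ just to the right of $\{x=\theta\}$ and so that the DSF to the right of the window is a function of $(N,Q)$ restricted to the right of the window. Here the boundedness of the grains is essential: a grain influences only points within distance $r$, so ``$\theta$ is a regeneration abscissa'' is a local event and the regeneration abscissas genuinely decouple the DSF on the two sides, yielding i.i.d.\ increments of $\gamma_{X}$ between consecutive regenerations; these abscissas a.s.\ tend to $+\infty$, whence the claim. (A softer alternative: a path trapped in some $\{x<a\}$ would produce infinitely many disjoint, well-separated unit squares to its right, each containing only $\Gamma$-covered points of $N$; by independence and the finite range of $\Gamma$ this has probability $0$.) With this in hand, the $A_{m,M}^{\delta,\ell}\cap A_{m,M}^{\delta,k}$ argument of Lemma~\ref{3paths} goes through word for word (with $A_{m,M}^{\delta}$, $B_{m,M}^{\delta}$ now read for the DSF on $\widetilde N$), and we fix integers $m,M$ and a parameter $\delta>0$ --- now allowed to depend on $r$ --- for which, with positive probability, three disjoint infinite paths of the DSF on $\widetilde N$ issue from $C_{m-\delta,M}$ and leave $C_{m,M}$ through $E_{m,M}$.

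For the $\widetilde N$-version of Lemma~\ref{corol2} I would repeat the shield construction of Section~\ref{section:P(F)>0} with one reinforcement of the event $D_{R,\varepsilon}$: besides requiring that each ball $B(X_{k},\varepsilon)$ carries exactly one point of $N$ and that $\Lambda_{m,M}^{R}$ contains no other point of $N$, require that the germ process $Q$ has no point in the bounded enlargement $\Lambda^{+}:=\Lambda_{m,M}^{R}\oplus B(O,r)$. On this event $\Gamma\cap\Lambda_{m,M}^{R}=\emptyset$, so every shield point is uncovered --- hence belongs to $\widetilde N$ --- and inside $\Lambda_{m,M}^{R}$ the DSF on $\widetilde N$ coincides with the DSF on $N$, so the shield plays exactly its former role. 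The independence decomposition still works: the reinforced event is measurable with respect to $N|_{\Lambda_{m,M}^{R}}$ and $Q|_{\Lambda^{+}}$, whereas, on it and by Lemma~\ref{lengthedge} with $R$ chosen large (as in Section~\ref{section:P(F)>0}) so that the shield points are too far to be ancestors of cell points, the three disjoint paths live in $C_{m,M}\cup\{x\ge m\}$ and are measurable with respect to $N|_{(\Lambda_{m,M}^{R})^{c}}$ and $Q|_{(\Lambda^{+})^{c}}$; these two blocks of data are independent. One then combines the two events exactly along the pattern of~\eqref{P1>0}--\eqref{inclusionfinale}, and the planarity conclusion of Section~\ref{section:P(F)>0} applies unchanged (a path $\gamma_{A}$, $A\in\widetilde N\cap\{x<m\}\setminus C_{m,M}$, cannot cross the shield chain without coalescing with it, hence cannot reach the middle path $\gamma_{X}$, which is moreover trapped between the two outer paths on $\{x>m-\delta\}$). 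This gives $\P(F_{m,M})>0$ and closes the argument.

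The main obstacle is the escape property of the second paragraph: porting the Baccelli--Bordenave regeneration scheme to the DSF on $\widetilde N$, whose vertex set is no longer Poisson --- the point to verify is that the finite range $r$ of the Boolean environment keeps the regeneration events local, so that a renewal structure with i.i.d.\ blocks can still be built. The remaining new ingredients are routine: strengthening the shield event by a void-probability requirement on $Q$ over a bounded set, and observing that the geometric Lemmas~\ref{lemme:eta} and \ref{lengthedge}, hence the whole surface/perimeter contradiction, are insensitive to deleting from $N$ the points covered by $\Gamma$.
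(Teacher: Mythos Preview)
Your plan coincides with the paper's: carry over the surface/perimeter machinery and Lemmas~\ref{lemme:eta}, \ref{lengthedge}, \ref{2paths}, \ref{3paths} by stationarity of $(N,Q)$, and modify only the shield step so that $\Gamma$ cannot delete the shield points. Two comments.

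\medskip

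\textbf{A gap in the shield modification.} Your choice $\Lambda^{+}=\Lambda_{m,M}^{R}\oplus B(O,r)$ is too large. Since $\Lambda_{m,M}^{R}$ abuts $C_{m,M}$ on its west, north and south sides and abuts $\{x=m,\ |y|\ge M\}$, the dilation $\Lambda^{+}$ contains germ positions at distance $\le r$ from points of $C_{m,M}$ and from points just to the right of $E_{m,M}$. When you run ``the pattern of~\eqref{P1>0}--\eqref{inclusionfinale}'' you need the analogue of~\eqref{P2>0}, i.e.\ $B^{\delta}_{m,M}(N,Q)\subset B^{\delta}_{m,M}(N^{out},Q^{out})$. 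But passing from $Q$ to $Q^{out}=Q|_{(\Lambda^{+})^{c}}$ removes germs that may be the \emph{only} ones covering some point $X\in N\cap C_{m,M}$ (or $X$ with $m\le x<m+r$); such an $X$ then enters the thinned process and can become a closer ancestor of a vertex on one of the three paths, destroying the inclusion. The paper avoids this by emptying $Q$ only on the thin annulus
\[
\Upsilon_{R,\varepsilon}=\bigl(B(O,R+r)\setminus B(O,R-r-\varepsilon)\bigr)\cap\{x<m\},
\]
which lies at distance of order $R$ from $C_{m,M}$; the points uncovered by deleting $Q\cap\Upsilon_{R,\varepsilon}$ are then too far (by Lemma~\ref{lengthedge}) to become ancestors of path vertices, while the shield balls $B(X_{k},\varepsilon)$ sit inside the protected annulus and hence stay in $\widetilde N$. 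With this smaller void region (and the paper's choice $\delta=2r$, so that the extremal shield points $X_{\pm(n+1)}$ also fall under the protection), your independence decomposition goes through exactly as you describe.

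\medskip

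\textbf{On the escape property.} You are right to single this out; the paper glosses over it with the sentence ``most of the arguments used in Section~\ref{proof} still hold''. Your two routes are both reasonable: the bounded range $r$ of the grains makes the Baccelli--Bordenave regeneration scheme local, and your soft argument (a path trapped in $\{x<a\}$ forces infinitely many well-separated regions in which every point of $N$ is $\Gamma$-covered, an event of probability zero by independence of $(N,Q)$ on disjoint sets) is also adequate for what Lemma~\ref{2paths} actually needs.
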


Let us remark that Theorem \ref{th1boolean} holds whatever the value of the intensity $\lambda$ of the Boolean model. In particular, the DSF is a tree even if $\Gamma$ contains unbounded components. See \cite{MR} for a complete reference on continuum percolation.\\
Moreover, Theorem \ref{th1boolean} remains true when the radii are i.i.d. random variables, provided the support of their common distribution is bounded.

\begin{proof}
We generalize the proof of Theorem \ref{th1}. Actually, thanks to the translation invariance of the joint process $(N,Q)$, most of arguments used in Section \ref{proof} still hold for the DSF constructed on $N\cap \Gamma^c$. Hence, we can assume that for any $\delta>0$ there exist $m,M\in \N^*$ such that the event $B^\delta_{m,M}(N,Q)$ has a positive probability (i.e. Lemma \ref{3paths}). It remains to prove that $\P(F_{m,M}(N,Q))>0$.\\
The main difference with the proof of Theorem \ref{th1} lies in the construction of the shield (Fig. \ref{fig:shield}). Indeed, the Boolean model $\Gamma$ could create holes in the shield so that the inclusion $D_{R,\varepsilon}\cap B_{m,M}^{\delta}\subset F_{m,M}$ is no longer true. To avoid this difficulty, it suffices to require that $\Gamma$ does not intersect any ball $B(X_k,\varepsilon)$ involved in the shield. For that purpose, let us introduce the event $H_{R,\varepsilon}(Q)$ defined as follows: there is no points of $Q$ in
$$
\Upsilon_{R,\varepsilon} = \big(B(O,R+r)\setminus B(O,R-r-\varepsilon)\big)\cap \{x<m\}
$$
(it can be assumed without restriction that $R>r+\varepsilon$). \\
Let $\delta=2r$. Henceforth, on the event $H_{R,\varepsilon}(Q)$ the Boolean model $\Gamma$ cannot intersect any ball $B(X_k,\varepsilon)$ with $|k|\leq n+1$. The same shield as in the previous section can be constructed in order to protect the three disjoint paths given by $B^\delta_{m,M}(N,Q)$. This leads to
\begin{equation}
\label{inclusionfinale2}
B^\delta_{m,M}(N,Q) \cap D_{R,\varepsilon}(N) \cap H_{R,\varepsilon}(Q) \subset F_{m,M}
\end{equation}
The arguments to prove that $B^\delta_{m,M}(N,Q)\cap D_{R,\varepsilon}(N)\cap H_{R,\varepsilon}(Q)$ occurs with positive probability are the same as in Section \ref{section:P(F)>0}. Let us consider four independent PPP: $N^{in}$ and $N^{out}$ with intensity $1$ on $\Lambda^R_{m,M}$ and $\R^2\setminus \Lambda^R_{m,M}$, and $Q^{in}$ and $Q^{out}$ with intensity $\lambda$ on $\Upsilon_{R,\varepsilon}$ and $\R^2\setminus \Upsilon_{R,\varepsilon}$. The existence of the three paths $\gamma_X$, $\gamma_Y$ and $\gamma_Z$ of $B^{\delta}_{m,M}(N,Q)$ is not affected if we change the point process $N$ in $\Lambda^R_{m,M}$ and if we delete the points of $Q$ in $\Upsilon_{R,\varepsilon}$:
$$
B^\delta_{m,M}(N^{in}+N^{out},Q^{in}+Q^{out}) \subset B^\delta_{m,M}(N^{out},Q^{out}) ~.
$$
This inclusion becomes an equality whenever the events $D_{R,\varepsilon}(N^{in})$ and $ H_{R,\varepsilon}(Q^{in})$ are satisfied:
\begin{multline}
\label{intersection2}
B^\delta_{m,M}(N^{in}+N^{out},Q^{in}+Q^{out}) \cap D_{R,\varepsilon}(N^{in}+N^{out}) \cap H_{R,\varepsilon}(Q^{in}+Q^{out})\\
= B^\delta_{m,M}(N^{out},Q^{out}) \cap D_{R,\varepsilon}(N^{in}) \cap H_{R,\varepsilon}(Q^{in})
\end{multline}
(as in \eqref{intersection}). Relations (\ref{inclusionfinale2}) and (\ref{intersection2}) imply:
\begin{align}
\label{etape3}
\P(F_{m,M})\geq & \P\big(B^\delta_{m,M}(N,Q)\cap D_{R,\varepsilon}(N)\cap H_{R,\varepsilon}(Q)\big)\nonumber\\
= & \P\big(B^\delta_{m,M}(N^{in}+N^{out},Q^{in}+Q^{out})\cap D_{R,\varepsilon}(N^{in}+N^{out})\cap H_{R,\varepsilon}(Q^{in}+Q^{out})\big)\nonumber\\
= & \P\big(B^\delta_{m,M}(N^{out},Q^{out})\cap D_{R,\varepsilon}(N^{in})\cap H_{R,\varepsilon}(Q^{in})\big)\nonumber\\
= & \P\big(B^\delta_{m,M}(N^{out},Q^{out})\big)\P\big(D_{R,\varepsilon}(N^{in})\big)\P\big(H_{R,\varepsilon}(Q^{in})\big) ~.
\end{align}
The proof ends with $\P(D_{R,\varepsilon}(N^{in}))>0$,
$$
\P \big( B^\delta_{m,M}(N^{out},Q^{out}) \big) \geq \P \big( B^\delta_{m,M}(N^{in}+N^{out},Q^{in}+Q^{out}) \big) = \P \big( B^\delta_{m,M}(N,Q) \big) > 0
$$
and
$$
\P \big( H_{R,\varepsilon}(Q^{in}) \big) = \P \big( Q(\Upsilon_{R,\varepsilon}) = 0 \big) \geq e^{-\lambda\pi((R+r)^2-(R-r-\varepsilon)^2)} > 0 ~.
$$
\end{proof}

\section{There is no bi-infinite path in the DSF}
\label{sectionbiinfini}

By construction of the DSF with direction $e_{x}$, each path is infinite to the right \ie with direction $e_{x}$. A path $\gamma$ of the DSF is said to be \textit{bi-infinite} if it is also infinite to the left \ie with direction $-e_{x}$. In other words, every point $X\in N$ of a bi-infinite path $\gamma$ is the ancestor of another point of $N$ (which belongs to $\gamma$ too).\\
As an illustration of Theorem \ref{th1}, we use the fact that the DSF is a tree to prove:

\begin{theorem}
\label{thbiinf}
There is a.s. no bi-infinite path in the DSF.
\end{theorem}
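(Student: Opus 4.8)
The plan is to argue by contradiction, exploiting the fact (Theorem~\ref{th1}) that the DSF is a.s. a tree together with a mass-transport / translation-invariance argument. Suppose the event that there exists a bi-infinite path has positive probability; by ergodicity of $N$ it then has probability one, and a.s. there is at least one bi-infinite path. Since the DSF is a.s. a tree, all the infinite-to-the-right paths eventually coalesce, so a.s. there is a single "trunk": the set $\mathcal{T}$ of points $X\in N$ such that $\gamma_X$ is (eventually) the common path of every $\gamma_Y$. A bi-infinite path is exactly a bi-infinite branch of this tree, and because the tree has one topological end to the right, a bi-infinite path must be contained in $\mathcal{T}$ and must itself be infinite to the left. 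So the question reduces to: can the trunk $\mathcal{T}$, viewed backwards (following children rather than ancestors), contain an infinite line?

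The key step I would carry out is a counting/mass-transport estimate in the spirit of Lemma~\ref{lemme:eta}. Fix a large box $\mathcal{R}_{L}=[-L,L]^2$. If a bi-infinite path exists, then with probability bounded below (uniformly in $L$, after centering) there is a point of the bi-infinite path inside a fixed small cell, and hence by stationarity the expected number of points of $\mathcal{R}_L$ lying on a bi-infinite path is of order $L^2$. On the other hand, each point $X$ on a bi-infinite path has an ancestor at distance $\le |X-\mathrm{anc}(X)|$, and — crucially — $X$ is itself an ancestor of some other point, which forces an empty half-disc of a controlled radius on the \emph{left} side of $X$ (mirroring the empty-half-disc argument for $\eta^{>}_{L}$ in Lemma~\ref{lemme:eta}). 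The combination "every point of the path has a neighbour to its left at distance $\le$ (typical edge length) and also forces emptiness to its left" should let me run a surface-versus-perimeter comparison: bi-infinite points in $\mathcal{R}_L$ that are "far apart" can be at most of order $L$, while the close ones can be grouped so that each group's path crosses the boundary $\partial\mathcal{R}_L$, giving a bound of order $L^{3/2}$ on the expected number of such crossings, exactly as in Lemma~\ref{lemme:eta}. The conflict between the $L^2$ lower bound and the $L^{3/2}$ upper bound yields the contradiction.

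An alternative, perhaps cleaner route uses the tree structure directly: in the (a.s.\ unique) DSF tree, reverse the edges and observe that the trunk, traversed toward the root-at-infinity on the right, is a half-line; a bi-infinite path would be a \emph{doubly} infinite line in a tree with a single end, which is not in itself contradictory, so one still needs a quantitative input. That input is the observation that along a bi-infinite path the abscissa is strictly increasing and the expected density of such "doubly served" vertices — vertices that are both a child of someone and an ancestor of someone along the same infinite line — is forced by stationarity to be positive, while the empty-half-disc constraint on their left makes their spatial density sub-linear in area. I expect the main obstacle to be the honest justification that a positive fraction of the bi-infinite path's vertices genuinely carry a macroscopic empty half-disc on their left (the naive version only gives emptiness out to the \emph{first} child's edge length, which could be short); handling this likely requires the same truncation into "short-edge" and "long-edge" vertices used in Lemma~\ref{lemme:eta}, plus a Borel--Cantelli or renewal argument (using the i.i.d.\ block structure from Theorem~4.6 of \cite{baccellibordenave}) to show that infinitely often a bi-infinite vertex does force a long empty interval on its left.
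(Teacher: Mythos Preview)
Your proposal has a genuine gap in the upper-bound step, and the overall strategy diverges from what actually works.

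The claim that ``$X$ is itself an ancestor of some other point, which forces an empty half-disc of a controlled radius on the \emph{left} side of $X$'' is not correct. If $Y$ is a child of $X$, the DSF rule gives an empty half-disc $B(Y,|X-Y|)\cap\{x>x_Y\}$ around $Y$, to the \emph{right} of $Y$; it says nothing about emptiness to the left of $X$. More damagingly, whatever local constraint you extract from ``$X$ has a child'' is shared by every vertex of $N$ that has at least one child, and those vertices have positive spatial density. So no local empty-region argument can force the set of bi-infinite vertices to be sub-areal: the property of lying on a bi-infinite path is genuinely non-local (it requires an infinite backward chain), and the surface-versus-perimeter machinery of Lemma~\ref{lemme:eta} has no handle on it. Your own diagnosis of the ``main obstacle'' is accurate, but the suggested fixes (truncation, renewal from \cite{baccellibordenave}) do not address this, because even long empty half-discs occur with positive density among ordinary points.

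The paper's proof uses Theorem~\ref{th1} in a completely different way. Rather than seeking a geometric upper bound on the density of bi-infinite vertices, it counts crossings of bi-infinite paths with vertical segments. Let $R_I$ be the number of intersection points of bi-infinite paths with a vertical interval $I$. Stationarity gives $\E(R_{\{0\}\times[0,L]})=L\,\E(R_{\{0\}\times[0,1]})$. Now push to the right: by a mass-transport identity (proved via translation invariance) one shows $\E(R_{\{0\}\times[0,L]})=\E(\widetilde R_{\{0\}\times[0,L],\,\{x\}\times\R})$ for every $x>0$, where $\widetilde R$ counts bi-infinite crossings of $\{x\}\times\R$ whose backward subtrees meet $\{0\}\times[0,L]$. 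Here is where Theorem~\ref{th1} enters: since all paths coalesce, $\widetilde R_{\{0\}\times[0,L],\,\{x\}\times\R}$ is eventually $\le 1$ as $x\to\infty$. Hence $L\,\E(R_{\{0\}\times[0,1]})\le 1$ for every $L$, forcing $\E(R_{\{0\}\times[0,1]})=0$. The quantitative input you were looking for is thus the trivial bound ``at most one'' on the number of distinct bi-infinite paths far to the right, not a perimeter estimate.
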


\begin{proof}
Considering the DSF as a subset of $\R^{2}$, we can define the number $R_I$ of intersection points of bi-infinite paths with a given vertical interval $I$ of $\R^2$:
\begin{equation}
R_I = \card\{ (x,y) \in I ;\  (x,y) \mbox{ belongs to a bi-infinite path} \} ~.
\end{equation}
Every bi-infinite path a.s. crosses any given vertical axis. Hence, the announced result becomes:
\begin{equation}
\label{butbiinf}
\P \big( R_{\{0\}\times \R} = 0 \big) = 1 ~.
\end{equation}
For a point $X$ of the DSF (not necessarily a point of $N$), let us denote by $\mathcal{T}_X$ the subtree of the DSF consisting of the bi-infinite paths going through $X$.\\
Let $I$ and $J$ be two vertical intervals of $\R^2$ such that the abscissa of $I$ is smaller than the one of $J$. We define by $\widetilde{R}_{I,J}$ the number of intersection points $X$ of bi-infinite paths with $J$ whose subtrees $\mathcal{T}_{X}$ intersect $I$:
\begin{equation}
\widetilde{R}_{I,J}=\card\{X\in J ;  X\mbox{ belongs to the DSF and }\mathcal{T}_{X}\cap I\not=\emptyset\}.
\end{equation}
Remark that the random variables $R_I$ and $\widetilde{R}_{I,J}$ are integrable as soon as $I$ has finite Lebesgue measure (they are bounded by the number of edges of the DSF crossing this interval).\\

The idea is to state with stationary arguments that for every $L>0$:
\begin{equation}
\label{but2}
\E\big(R_{\{0\}\times [0,L]}\big)=\E\big(\widetilde{R}_{\{0\}\times [0,L], \{x\}\times \R}\big).
\end{equation}
If \eqref{but2} holds then the coalescence of infinite paths (Theorem \ref{th1}) forces both members of the equality to be zero. Indeed, as a decreasing sequence of nonnegative integers, the limit of $(\widetilde{R}_{\{0\}\times[0,L], \{x\}\times \R})_{x>0}$, as $x\to+\infty$, exists. It is smaller than $1$ by Theorem \ref{th1}. Thanks to \eqref{but2} and stationarity we get for every $L>0$:
\begin{align}
\label{but3}
1\geq \E\big(R_{\{0\}\times [0,L]}\big)=L \E\big(R_{\{0\}\times [0,1]}\big).
\end{align}
Letting $L\rightarrow +\infty$ in \eqref{but3}, the expectation of $R_{\{0\}\times [0,1]}$ is necessarily $0$ and \eqref{butbiinf} follows.\\

Let us now prove \eqref{but2}. For $x\in \R$ and $L>0$, all bi-infinite paths crossing $\{0\}\times [0,L]$ also cross the vertical line $\{x\}\times \R$. Thus, $\widetilde{R}_{\{0\}\times [0,L],\{x\}\times \R}$ is smaller than $R_{\{0\}\times [0,L]}$. So do their expectations:
\begin{equation}
\label{etape8}
\E\big(\widetilde{R}_{\{0\}\times [0,L],\{x\}\times \R}\big)\leq \E\big(R_{\{0\}\times [0,L]}\big) .
\end{equation}
To show that (\ref{etape8}) is actually an equality, let us first remark that $R_{\{0\}\times [0,L]}$ and $R_{\{x\}\times [0,L]}$ have the same expectation. The bi-infinite paths crossing $\{x\}\times [0,L]$ can be traced back to intersection points of bi-infinite paths with segments of the form $\{0\}\times [nL,(n+1)L]$, $n\in \Z$:
\begin{equation}
R_{\{x\}\times [0,L]}= \widetilde{R}_{\{0\}\times \R,\{x\}\times [0,L]}\leq \sum_{n\in \Z}\widetilde{R}_{\{0\}\times [nL,(n+1)L],\{x\}\times [0,L]}.\label{etape9}
\end{equation}
Thanks to stationarity by vertical translation of $-nL$ for all $n\in \Z$, $\widetilde{R}_{\{0\}\times [nL,(n+1)L],\{x\}\times [0,L]}$ and $\widetilde{R}_{\{0\}\times [0,L],\{x\}\times [-nL,-(n-1)L]}$ are identically distributed. Thus, \eqref{etape9} gives:
\begin{equation*}
\E\big(R_{\{0\}\times [0,L]}\big)=\E\big(R_{\{x\}\times [0,L]}\big)\leq \E\Big(\sum_{n\in \Z}\widetilde{R}_{\{0\}\times [0,L],\{x\}\times [-nL,-(n-1)L]}\Big)=\E\Big(\widetilde{R}_{\{0\}\times [0,L],\{x\}\times \R}\Big).
\end{equation*}
This provides \eqref{but2} and proves Theorem \ref{thbiinf}.
\end{proof}

Finally, let us notice that an alternative proof exists based on the notion of bifurcating point. See the proof of Theorem 2.2 of \cite{gangopadhyayroysarkar}.

\section{A final remark}

Let us describe an elementary model of coalescing random walks on $\Z^{2}_{\mathcal{E}}=\{(i,j)\in\Z^{2},i+j \mbox{ is even}\}$. Each vertex $(i,j)$ of $\Z^{2}_{\mathcal{E}}$ is connected to $(i+1,j+1)$ or $(i+1,j-1)$ with probability $\frac{1}{2}$, and independently from the others. This process provides a forest $\mathcal{F}$ with direction $e_{x}$ spanning all $\Z^{2}_{\mathcal{E}}$. Using classical results on simple random walks, one can prove that $\mathcal{F}$ is a.s. a tree (\eg \cite{fontesisopinewmanravishankar}).\\
Now, let us define a graph $\mathcal{F}^{\ast}$ on $\Z^{2}_{\mathcal{O}}=\{(i,j)\in\Z^{2},i+j \mbox{ is odd}\}$ as follows: $(i,j)\in\Z^{2}_{\mathcal{O}}$ is connected to $(i-1,j+1)$ (resp. to $(i-1,j-1)$) if and only if the vertex $(i-1,j)\in\Z^{2}_{\mathcal{E}}$ is connected to $(i,j-1)$ (resp. to $(i,j+1)$) in $\mathcal{F}$. First, the forests $\mathcal{F}$ and $\mathcal{F}^{\ast}$ are identically distributed, which implies that $\mathcal{F}^{\ast}$ is also a tree. Moreover, $\mathcal{F}^{\ast}$ is the dual graph of $\mathcal{F}$: since $\mathcal{F}^{\ast}$ is a tree, there is no bi-infinite path in $\mathcal{F}$.\\
In the same way, we wonder if the DSF constructed on the PPP $N$ admits a dual forest with the same distribution, which will allow to derive Theorem \ref{thbiinf} from applying Theorem \ref{th1} to this dual forest.

\bigskip
\noindent \textbf{Acknowledgements: }The authors are grateful to Fran\c{c}ois Baccelli for introducing the DSF model to them. The authors also thank the members of the "Groupe de travail G\'{e}om\'{e}trie Stochastique" of Universit\'{e} Lille 1 for enriching discussions.\\

{\small \providecommand{\noopsort}[1]{}

}

\end{document}